\newtheorem{theorem}{Theorem}
\newtheorem{lemma}[theorem]{Lemma}
\newtheorem{corollary}[theorem]{Corollary}
\newenvironment{remark}[1][Remark.]{\begin{trivlist}
\item[\hskip \labelsep {\bfseries #1}]}{\end{trivlist}}
\newcommand{\tens}[1]{
  \mathbin{\mathop{\otimes}\limits_{#1}}
}
\begin{document}
\title{The Bernstein-Sato b-function for the complement of the open $SL_n$-orbit on a triple flag variety}
\author{Henry Scher}
\maketitle

\begin{abstract}
We calculate Bernstein-Sato b-functions for $f_{G^3}^\lambda$, a $SL_n$-invariant section of a line bundle on $SL_n/B \times SL_n/B \times \mathbb{P}^{n - 1}$ whose zero-set is the complement of the open $G$-diagonal orbit. The proof uses a similar calculation by Kashiwara of the b-function for $f^\lambda$, a $B^-$-semiinvariant section of a line bundle on $SL_n/B$ whose zero-set is the complement of the big Bruhat cell. 
\end{abstract}

\section{Introduction}
Let $X$ be an algebraic variety over $\mathbb{C}$, and let $f$ be an algebraic function on $X$. The Bernstein-Sato function of $f$ is defined by the monic generator of the ideal of $b(s)$ such that there is a differential operator $P(s)$ such that

\begin{equation}
P(s) f^{s + 1} = b(s) f^s
\end{equation}
where $P(s) \in \mathcal{D}(X)[s]$. There have been various generalizations to multiple functions. In this paper, the ideal will be defined by the following functional equation:

\begin{equation}
P_{m_1, m_2, \dots, m_n}(s_1, s_2, \dots, s_n) \prod_i f_i^{s_i + m_i} = b_{m_1, m_2, \dots, m_n}(s_1, s_2, \dots, s_n)
\end{equation}
where $P(s) \in \mathcal{D}(X)[s_1, s_2, \dots, s_n], m_i \in \mathbb{Z}$. Sabbah \cite{SABBAH} and Gyoja \cite{GYOJA} generalized Kashiwara's result about rationality of the roots of b-functions by showing that for each choice of $\{t_i\}$, there is an element in the b-function ideal that can be written as the product of hyperplanes (i.e. functions of the form $\sum_i a_i t_i + k_i$). 

Let $G = SL_n$, $B$ a Borel subgroup, and $\mathcal{F}$ be the corresponding flag variety. The action of $G$ on $\mathcal{F}$ has a unique open $B$-orbit; its complement is a hypersurface, so there are $B$-semiinvariant global sections $f^\lambda$ of $G$-equivariant line bundles (which, for each line bundle, are unique when they exist). The zero-sets of these sections are components of the hypersurface. In 1985, Kashiwara \cite{KASHIWARA} found the b-function for these sections using universal Verma modules.

Let $X = \mathcal{F} \times \mathcal{F} \times \mathbb{P}^{n - 1}$. Then the diagonal $G$-action on $X$ has an open orbit with a hypersurface complement; correspondingly, there are $G$-semiinvariant global sections (for a specific character) of $G$-equivariant line bundles $\mathcal{L}$ on $X$ whose zero-sets are components of the complement of this open orbit \cite{GINZBURGFINKELBERG}. It has been conjectured that the b-function of these sections could be found using techniques similar to those used by Kashiwara. This paper finds those b-functions.

We start in section 3 by defining the $G$-invariant global sections $f_{G}^\lambda$ of line bundles on $X$, and the relevant functional equation for the type of b-functions we wish to use. In section 4, we explain some of Kashiwara's techniques, including finding differential operators on $\mathcal{F}$, and use his results to find a relationship among his differential operators. In section 5, we use Kashiwara's differential operator to find the differential operator on $X$ that satisfies the functional equation, and a relationship between the b-function Kashiwara found for a different global section and the b-function for the global section examined in this paper, through a function $H(\lambda)$; we also show that there is a similar relationship between the differential operators for the $G$-invariant global sections. In section 6, we examine $H(\lambda)$ more closely and find what it is for some of the $G$-invariant global sections. In section 7, we use the relationships between the differential operators in order to prove one of the conjectures from Ginzburg and Finkelberg \cite{GINZBURGFINKELBERG}, that the b-functions can be factored into linear factors, and also prove some relationships between the b-functions. In section 8, we use the values given in section 6 with the relationships in section 5 to fully determine all of the b-functions.

\section{Notation}
\begin{enumerate}
\item $G = SL_n$

\item $B$ is a Borel subgroup of $G$.

\item $P$ is a mirabolic subgroup of $G$, that is, a subgroup of $G$ that fixes a line in $\mathbb{C}^n$. 

\item $\mathcal{F} = G/B$ is the flag variety of $G$, thought of as the moduli space of Borel subgroups.

\item $X = \mathcal{F} \times \mathcal{F} \times \mathbb{P}^{n - 1}$ is the moduli space of triples of two Borel subgroups and a mirabolic subgroup. 

\item $V_{\lambda_1}$ is the irreducible representation of $G$ with highest weight $\lambda_1$.

\item If $\lambda = (\lambda_1, \lambda_2, l)$, then $V_\lambda := V_{\lambda_1} \otimes V_{\lambda_2} \otimes Sym^l \mathbb{C}^n$ is an irreducible representation of $G^3 = G \times G \times G$.

\item $B_1 \times B_2 \times P$ is a triple of subgroups of $G$, with $B_1, B_2$ Borel and $P$ mirabolic.

\item $B_1' \times B_2' \times P'$ is a triple of subgroups of $G$, with $B_1', B_2'$ Borel and $P'$ dual-mirabolic (i.e. fixing a line in $\mathbb{C}^n$ with the dual action of $G$). 

\item $v_\lambda$ is a highest weight vector in $V_\lambda$ with respect to $B_1 \times B_2 \times P$. 

\item $v_{-\lambda}$ is a highest weight vector in $V_\lambda^*$ with respect to $B_1' \times B_2' \times P'$ such that $\langle v_{-\lambda}, v_\lambda \rangle = 1$.

\item $\Omega$ is the set of triples $\lambda$ such that there is a diagonally $G$-invariant vector $u_{-\lambda} \in V_\lambda^*$ with $\langle u_{-\lambda}, v_\lambda \rangle = 1$.

\item $u_\lambda \in V_\lambda$ is the diagonally $G$-invariant vector with $\langle v_{-\lambda}, u_\lambda \rangle = 1$.

\item $H(\lambda) = \frac{\langle v_{-\lambda}, v_\lambda\rangle \langle u_{-\lambda}, u_\lambda \rangle}{\langle v_{-\lambda}, u_\lambda \rangle \langle u_{-\lambda}, v_\lambda \rangle}$

\item $\omega$ is the standard representation $\mathbb{C}^n$ of $G$

\item $\wedge^i \omega_1$ is the $i$th fundamental representation of $SL_n$.

\item $\omega_j$ is the standard representation of $SL_j \subset G$. 
\end{enumerate}
\section{The relevant line bundle and section}
Choose two Borel subgroups $B_1, B_2$ and a mirabolic subgroup $P$ of $G = SL_n$, i.e. a subgroup $P$ is the stabilizer of a point in $\mathbb{P}^{n - 1}$; such a subgroup is called a mirabolic subgroup. This subgroup is the stabilizer of a unique point $x \in X$, so $X$ can be seen as the moduli space of such triples. We further require that they be in general position; that is, that $x$ lie in the open $G$-diagonal orbit. Let $\Gamma$ be the lattice of triples $(\lambda_1, \lambda_2, l)$ of pairs of integral weights and an integer; let $\Gamma_{\geq 0}$ be the subcone where the weights are dominant and the integer is nonnegative.

For $\lambda = (\lambda_1, \lambda_2, l) \in \Gamma_{\geq 0}$, define $V_\lambda = V_{\lambda_1} \otimes V_{\lambda_2} \otimes Sym^l \mathbb{C}^n$. Let $\Omega$ be the set of $\lambda \in \Gamma_{\geq 0}$ such that $V_\lambda^*$ has a nontrivial $G$-invariant element.

\begin{lemma}
$\Omega$ is a cone (i.e. it is closed under addition).
\end{lemma}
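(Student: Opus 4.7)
The plan is to construct a nonzero diagonal-$G$-invariant in $V_{\lambda+\mu}^*$ by Cartan multiplication applied in the dual direction.

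Given $\lambda,\mu\in\Omega$, I would start from the diagonal-$G$-invariants $u_{-\lambda}\in V_\lambda^*$ and $u_{-\mu}\in V_\mu^*$ furnished by notation item~12, normalized so that $\langle u_{-\lambda},v_\lambda\rangle=\langle u_{-\mu},v_\mu\rangle=1$. Their tensor product $u_{-\lambda}\otimes u_{-\mu}\in V_\lambda^*\otimes V_\mu^*$ is then invariant under the diagonal $G$ acting through $G\hookrightarrow G^3\hookrightarrow G^3\times G^3$.

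Next I would invoke Cartan multiplication at the level of $G^3$: the vector $v_\lambda\otimes v_\mu$ is a $B_1\times B_2\times P$-highest weight vector of weight $\lambda+\mu$ inside $V_\lambda\otimes V_\mu$, so the $G^3$-subrepresentation it generates is isomorphic to the irreducible $V_{\lambda+\mu}$. Complete reducibility of finite-dimensional $G^3$-modules then supplies a $G^3$-equivariant projection $\pi:V_\lambda^*\otimes V_\mu^*\twoheadrightarrow V_{\lambda+\mu}^*$ (the dual of this inclusion). Since $\pi$ is in particular diagonal-$G$-equivariant, $u:=\pi(u_{-\lambda}\otimes u_{-\mu})$ is a diagonal-$G$-invariant element of $V_{\lambda+\mu}^*$.

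Finally, I would verify that $u$ is nonzero, which is the only step with real content. Identifying $v_{\lambda+\mu}$ with $v_\lambda\otimes v_\mu$ under the Cartan embedding, one computes
\[
\langle u,v_{\lambda+\mu}\rangle=\langle u_{-\lambda}\otimes u_{-\mu},\,v_\lambda\otimes v_\mu\rangle=\langle u_{-\lambda},v_\lambda\rangle\langle u_{-\mu},v_\mu\rangle=1,
\]
so $u\neq 0$ and in fact already exhibits the normalization required by item~12, giving $\lambda+\mu\in\Omega$. The main care required is tracking the two different $G^3$-actions at play (the outer action on $V_\lambda\otimes V_\mu$ versus the diagonal $G^3\hookrightarrow G^3\times G^3$ on which the diagonal-$G$-invariance depends); once those are kept straight, both the Cartan multiplication step and the pairing computation are standard, and I do not anticipate a more serious obstacle.
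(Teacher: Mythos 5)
Your Cartan-product argument is correct and is a genuine variation on the paper's proof, so it is worth laying the two side by side. The paper invokes Borel--Weil to translate diagonal-$G$-invariants of $V_\lambda^*$ into $G$-invariant global sections of $L_\lambda$ on $X$, multiplies the two sections, and concludes the product is nonzero because $X$ is irreducible (so the graded ring $\bigoplus_\lambda\Gamma(X,L_\lambda)$ is a domain). Your projection $\pi:V_\lambda^*\otimes V_\mu^*\twoheadrightarrow V_{\lambda+\mu}^*$ is exactly the representation-theoretic avatar of that section multiplication, so the two arguments run in parallel; the real difference is how the nonvanishing of the image is established. You pair against $v_{\lambda+\mu}$ and use the normalization $\langle u_{-\lambda},v_\lambda\rangle=1$ from item 12, which presupposes $\langle u_{-\lambda},v_\lambda\rangle\neq 0$ --- a fact the paper only justifies \emph{after} this lemma (it holds because $(B_1,B_2,P)$ was chosen in general position: a nonzero $G$-invariant section is constant on $G$-orbits, hence nonvanishing on the dense open orbit, hence nonzero at that base point). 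The paper's irreducibility argument sidesteps this entirely, which is precisely why the lemma can be stated and proved before $v_\lambda$ and the normalization are introduced. Your proof is sound once you note explicitly that it leans on the general-position hypothesis through the pairing; as written it silently imports a normalization that is not yet available at that point in the text.
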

\begin{proof}
By the Borel-Weil theorem, the global sections of the line bundle $L_\lambda$ on $X = \mathcal{F} \times \mathcal{F} \times \mathbb{P}^{n - 1}$ form a representation of $G^3$ isomorphic to $V_\lambda^*$. As such, $V_\lambda^*$ contains a nontrivial $G$-invariant element under the diagonal action of $G$ if and only if there is a nontrivial global $G$-invariant section of $L_\lambda$. Then because $X$ is an irreducible variety (and as such has a homogeneous coordinate ring without zero divisors), if there is a nontrivial $G$-invariant element of $V_\lambda^*$ and another of $V_{\lambda'}^*$, then there necessarily is a nontrivial $G$-invariant element of $V_{\lambda + \lambda'}^*$ corresponding to the product of the two invariant sections in $L_{\lambda + \lambda'}$.
\end{proof}

For $\lambda \in \Gamma_{\geq 0}$, define $v_\lambda \in V_\lambda$ as a nonzero highest-weight element with respect to $B_1 \times B_2 \times P$. Because we have chosen our subgroups in general position, $v_\gamma^G \neq 0$ for all $\gamma \in \Gamma_{\geq 0}$.   

Define $X' = \mathcal{F} \times \mathcal{F} \times (\mathbb{P}^{n - 1})^\vee$ with a $G^3$-action, where the $G$-action on $(\mathbb{P}^{n - 1})^\vee$ comes from the action of $G$ on the dual of the usual representation. Then $X'$ corresponds to the moduli space of triples of two Borel subgroups $B_1', B_2'$ and a dual-mirabolic subgroup $P'$ (i.e. a mirabolic subgroup fixing an element of the dual of the standard representation). We then say that $x' \in X'$ is in general position if it is in the open $G$-orbit on $X'$. If we choose an invariant element $u_{-\lambda} \in V_\lambda^*$, then $\langle u_{-\lambda}, v_\lambda \rangle = 0$ is the identifying equation of a closed subvariety of the complement of the open orbit. Therefore $x'$ being in the open orbit is equivalent to the existence for any $\lambda \in \Omega$ of a $u_{-\lambda} \in V_\lambda^*$ that is $G$-invariant with $\langle u_{-\lambda}, v_\lambda\rangle = 1$ (by scaling). Finally, a point $(x, x') \in X \times X'$ is in general position if both $x$ and $x'$ are in general position and each subgroup and its primed counterpart are opposite. Equivalently (for similar reasoning to the above), for any $\lambda \in \Gamma_{\geq 0}$, we want there to be an element $v_{-\lambda} \in V_\lambda^*$ of highest weight with respect to $B_1' \times B_2' \times P'$ such that $\langle v_{-\lambda}, v_\lambda\rangle = 1$. Borel-Weil implies that the correspondence between $V_\lambda^*$ and $\Gamma(X, L_\lambda)$ can be given by $w \rightarrow [g \rightarrow \langle w, g v_\lambda\rangle]$. We therefore have two sections of $L_\lambda$, $f_{G^3}^\lambda$ and $f_{K}^\lambda$, such that $f_{G^3}^\lambda$ is diagonally $G$-invariant, while $f_{K}^\lambda$ is left $B_1' \times B_2' \times P'$ semiinvariant. The former subscript denotes that the section is $G$-invariant; the latter denotes that the section comes from the work of Kashiwara.

Because there is both a unique open $G$-orbit and a unique open $B_1' \times B_2' \times P'$-orbit on $X$, these sections are the unique ones with this property up to scaling. Then as products of semiinvariants are semiinvariant, and by evaluating at the identity on $G^3$, we can see that $f_{K}^\lambda f_{K}^\mu = f_{K}^{\lambda + \mu}$ and $f_{G^3}^\lambda f_{G^3}^\mu = f_{G^3}^{\lambda + \mu}$. 

The b-function we want to find is that of $f_{G^3}^\lambda$; to be more precise, we want to find solutions to the following functional equation:
\begin{equation}
P_{\mu} f_{G^3}^{\lambda + \mu} = b_\mu(\lambda) f_{G^3}^\lambda 
\end{equation}
where $P_\mu$ is a twisted differential operator on $\mathcal{F}$ (equivalently, a differential operator on $G$ such that for any right $B$-semiinvariant $f$, $P_\mu f$ is also right $B$-semiinvariant).

This corresponds to a b-function using the definition given by Gyoja \cite{GYOJA}, where the $f_i$ are $f_\lambda$ for $\lambda$ a generator of $\Omega$. We will describe those generators in section 5.

\section{Kashiwara's argument and the differential operator}\label{Kashiwarasection}
Let $G$ be any semisimple complex simply connected Lie group with Lie algebra $\mathfrak{g}$; let $B$ be a Borel subgroup and $B'$ be an opposite Borel subgroup with common torus $H = B \cap B'$. Then Kashiwara wanted to find the b-function of the $B'$-semiinvariant section $f_G^\lambda$ of the line bundle $L_\lambda$ over $\mathcal{F} = G/B$, using its cover (which we will also call $f_G^\lambda$ through abuse of notation) $f_G^{\lambda}: G \rightarrow \mathbb{C}, g \rightarrow \langle v_{-\lambda}, g v_{\lambda}\rangle$ where $\lambda$ is an integral dominant weight, $v_{\lambda} \in V_{\lambda}$ is a highest weight vector wrt $B$ (and therefore also with respect to $H$), and $v_{-\lambda} \in V_{\lambda}^*$ is a highest weight vector wrt $B'$ (and lowest weight wrt $H$). 

Let $R_+$ be the set of positive roots of $G = SL_n$ with respect to $B$, $\rho = \frac{1}{2} \sum_{\alpha \in R_+} \alpha$, and for any $\alpha \in R_+$, let $h_\alpha(\mu) = \langle \alpha, \mu\rangle$. 

Define $\mathcal{D}^\mu(\mathcal{F})$ as the set of differential operators that twist by $-\mu$. In other words, if $P \in \mathcal{D}^\mu(\mathcal{F})$ and $f \in \mathcal{L}_\lambda$, then $P f \in \mathcal{L}_{\lambda - \mu}$. 

\begin{theorem} (Kashiwara) \cite{KASHIWARA}
There is a twisted differential operator $P_\mu \in \mathcal{D}^\mu(\mathcal{F})$ on $\mathcal{F}$ such that for any $\lambda, \mu$ nonnegative weights for $G$:

\begin{equation}
P_{\mu} f_G^{\lambda + \mu} = b_{\mu}(\lambda) f_G^{\lambda}
\end{equation}

where $b_\mu(\lambda) = \prod_{\alpha \in R_+} \prod_{i = 1}^{h_\alpha(\mu)} (h_\alpha(\lambda) + h_\alpha(\rho) + i)$.
\end{theorem}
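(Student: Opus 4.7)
The plan is to follow Kashiwara's approach via universal Verma modules: treat $\lambda$ as a formal parameter valued in $\mathfrak{h}^*$ and build a single ``universal'' operator whose specializations recover $P_\mu$ and $b_\mu(\lambda)$.

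First I would lift the problem from $\mathcal{F}$ to $G$. A section of $\mathcal{L}_\lambda$ corresponds to a function $f$ on $G$ satisfying $f(gb) = \lambda(b)^{-1} f(g)$, and the given section becomes the matrix coefficient $f_G^\lambda(g) = \langle v_{-\lambda}, g v_\lambda \rangle$. Under this identification $\mathcal{D}^\mu(\mathcal{F})$ is the weight-$(-\mu)$ component (for the right $H$-action) of the right-$N$-invariant differential operators on $G$, and such operators are built from left-invariant vector fields, i.e.\ from $U(\mathfrak{g})$ acting via right translation, which is exactly how one shifts the right $H$-weight.

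Second I would construct $P_\mu$ one root at a time. For each positive root $\alpha$ with $\mathfrak{sl}_2$-triple $(e_\alpha, f_\alpha, h_\alpha)$, restrict to the corresponding $\mathfrak{sl}_2$-subalgebra, where the computation is the classical $\mathfrak{sl}_2$ Verma module calculation: $e_\alpha^{h_\alpha(\mu)}$, acting via right translation on $f_G^{\lambda+\mu}$, produces $\prod_{i=1}^{h_\alpha(\mu)} (h_\alpha(\lambda) + h_\alpha(\rho) + i)$ times the corresponding matrix coefficient of weight $h_\alpha(\lambda)$, with the $h_\alpha(\rho)$ shift coming from the usual Harish-Chandra conversion between $U(\mathfrak{n}^-)$-weight and $\mathfrak{h}$-weight. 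Using a reduced expression for the longest Weyl element $w_0$ to order $R_+$, chain these rank-one pieces into an operator $P_\mu$ whose action on the canonical highest-weight generator of the universal Verma module is a product over $R_+$ of the single-root scalars.

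Third I would check that $P_\mu$ built this way is genuinely right-$N$-invariant, has right $H$-weight $-\mu$, and that specialization of $\lambda$ to an arbitrary dominant integral weight produces the claimed functional equation; globally this reduces to a statement about polynomial functions on the big Bruhat cell, on which $f_G^\lambda$ is a product of explicit $B'$-semiinvariants.

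The main obstacle will be the multi-root interaction: the operators $e_\alpha^{h_\alpha(\mu)}$ for different positive roots do not commute in $U(\mathfrak{g})$, so their naive product need not act diagonally on $f_G^{\lambda+\mu}$. Kashiwara's insight is that inside the universal Verma module the extremal vector of weight $\lambda+\mu$ is characterized by its weight alone, so the product of non-commuting raising operators still collapses to a scalar on it; and that scalar, computed via an induction along the chosen reduced expression for $w_0$, factors as a product over $R_+$ with the uniform $\rho$-shift, yielding exactly the formula in the statement.
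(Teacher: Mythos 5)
Your route is genuinely different from the paper's. The paper does not construct $P_\mu$ explicitly: it trivializes $\mathcal{D}(G) \cong \mathbb{C}[G] \otimes U\mathfrak{g}$ by right translation, uses quantum Hamiltonian reduction and Peter--Weyl to write $\mathcal{D}^\mu(\mathcal{F}) = \bigoplus_\nu V_\nu \otimes (V_\nu^* \otimes U\mathfrak{g}/(U\mathfrak{g})\mathfrak{n})^{\mathfrak{b}_{-\mu}}$, and invokes two structure theorems (proved in the appendix) to show the lowest isotypic piece $(V_\mu^* \otimes U\mathfrak{g}/(U\mathfrak{g})\mathfrak{n})^{\mathfrak{b}_{-\mu}}$ is a free rank-one $U\mathfrak{h}$-module with generator $P$; then $P_\mu := v_\mu \otimes P$. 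The functional equation follows purely from $G$-equivariance of the application map and the one-dimensionality of the $(-\lambda)$-weight space of $\Gamma(\mathcal{F}, L_\lambda) \cong V_\lambda^*$ via Borel--Weil. Crucially, the paper does \emph{not} re-derive the closed-form factorization of $b_\mu$ --- it cites Kashiwara for it. Your proposal attempts to do strictly more: build $P_\mu$ as a chain of $\mathfrak{sl}_2$-operators and extract the explicit scalar directly. That is a legitimate alternative in spirit (it is the Shapovalov-determinant side of Kashiwara's argument), but it is substantially harder, and the sketch has gaps.

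Concretely: the element $\prod_{\alpha \in R_+} e_\alpha^{h_\alpha(\mu)}$ (in any ordering) has $\mathfrak{h}$-weight $\sum_{\alpha>0} h_\alpha(\mu)\,\alpha$, which is not $\mu$ --- already for $SL_2$ with $\mu = m\omega$ this equals $m\alpha = 2\mu$ --- so this product is not even a candidate for $P_\mu$. You are conflating the \emph{degree of $b_\mu$ as a polynomial in $\lambda$} (which is $\sum_\alpha h_\alpha(\mu)$) with the weight of the operator; a genuine $P_\mu$ is a specific $U\mathfrak{h}$-linear combination of PBW monomials in $U\mathfrak{n}$ whose multidegrees are Kostant partitions of $\mu$. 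Second, the assertion that ``inside the universal Verma module the extremal vector of weight $\lambda+\mu$ is characterized by its weight alone, so the product of non-commuting raising operators collapses to a scalar'' does not hold as stated: non-extremal weight spaces of a Verma module generically have dimension given by the Kostant partition function, and the image of the highest-weight vector under your chain lands in one of these, not in a one-dimensional space. What actually pins the target down is that the image is annihilated by the opposite nilradical (equivalently, it is a lowest-weight vector of an embedded dual Verma), and establishing this, together with the uniform $\rho$-shift in each factor, is exactly the Shapovalov-determinant computation --- it needs the inductive machinery (in the style of Jantzen) that the proposal only gestures at. If your aim is merely the existence of $P_\mu$ and the functional equation, the paper's representation-theoretic argument is cleaner; if you want to re-prove the explicit $b_\mu$, the sketch needs the weight bookkeeping fixed and the multi-root collapse argued via the contravariant form rather than by appeal to extremality.
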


\begin{proof}
Following Kashiwara, we can trivialize the sheaf of differential operators on $G$ using right translation: $\mathcal{D}(G) = \mathbb{C}[G] \otimes R(U\mathfrak{g})$, where $R: U\mathfrak{g} \rightarrow \mathcal{D}(G)$ is the extension to $U\mathfrak{g}$ of the map $R: \mathfrak{g} \rightarrow TG$ given by right translation. Then by quantum Hamiltonian reduction \cite{ETINGOFGINZBURG}, $\mathcal{D}^\mu(\mathcal{F}) = \left(\frac{\mathcal{D}(G)}{\mathcal{D}(G) \mathfrak{n}}\right)^{\mathfrak{b}_{-\mu}}$, where the subscript on $\mathfrak{b}$ denotes the subset of elements of weight $-\mu$. The quotient $\frac{\mathcal{D}(G)}{\mathcal{D}(G) \mathfrak{n}} \simeq \mathbb{C}[G] \otimes (U \mathfrak{g}/((U \mathfrak{g}) \mathfrak{n}))$. Using the well-known decomposition $\mathbb{C}[G] = \oplus (V_\nu \otimes V_\nu^*)$ as $G$-bimodules and rearranging tensor products, we get that $\mathcal{D}_\mu(\mathcal{F}) = \oplus (V_\nu \otimes (V_\nu^* \otimes \frac{U \mathfrak{g}}{(U \mathfrak{g}) \mathfrak{n}}))^{\mathfrak{b}_{-\mu}}$. Because we trivialized by right translation, and because the decomposition separates the actions of left and right translation, $\mathcal{D}^\mu(\mathcal{F}) = \oplus V_\nu \otimes (V_\nu^* \otimes \frac{U \mathfrak{g}}{(U \mathfrak{g}) \mathfrak{n}})^{\mathfrak{b}_{-\mu}}$, where $G$ acts on $V_\nu$, while 

We will use two results that Kashiwara proved; the proofs are given in the appendix. Define $U \mathfrak{h} = S\mathfrak{h}$ as the symmetric algebra and universal enveloping algebra of $\mathfrak{h}$.

\begin{theorem}
Let $V$ be a finite dimensional $\mathfrak{b}$-module. Then $(V \otimes \frac{U \mathfrak{g}}{(U \mathfrak{g}) \mathfrak{n}})^{\mathfrak{b}_{-\mu}}$ is a $S\mathfrak{h}$ module of rank equal to the dimension of the $\mu$ weight subspace of $V$. 
\end{theorem}

\begin{theorem}
Assume that the dimension of $V^\mu$ is 1. Then $(V \otimes \frac{U \mathfrak{g}}{(U \mathfrak{g}) \mathfrak{n}})^{\mathfrak{b}_{-\mu}}$ is free as a right $U \mathfrak{h}$-module.
\end{theorem}

The first theorem implies that if $\nu \ngeq \mu$, then $(V_\nu^* \otimes \frac{U \mathfrak{g}}{(U \mathfrak{g}) \mathfrak{n}})^{\mathfrak{b}_{-\mu}}$ is trivial. The second then shows that in the minimal case of $\nu = \mu$, $(V_\mu^* \otimes \frac{U \mathfrak{g}}{(U \mathfrak{g}) \mathfrak{n}})^{\mathfrak{b}_{-\mu}}$ is a free rank 1 right $U \mathfrak{h}$-module. Call the generating element $P \in (V_\mu^* \otimes \frac{U \mathfrak{g}}{(U \mathfrak{g}) \mathfrak{n}})^{\mathfrak{b}_{-\mu}}$. Then by attaching a highest weight vector $v_\mu \in V_\mu$, we get a differential operator $P_\mu = v_\mu \otimes P \in \mathcal{D}_\mu(\mathcal{F})$. Note that $P_\mu$ varies under left translation as $v_\mu$ does, that is, $g P_\mu = (g v_\mu) \otimes P$ where $g$ acts on differential operators by left translation. 

By the definition of $\mathcal{D}_\mu(\mathcal{F})$, $P_\mu f^{\lambda + \mu} \in \Gamma(\mathcal{F}, L_\lambda)$. Further, as the map of application of differential operators $\mathcal{D}_\mu(\mathcal{F}) \otimes \Gamma(\mathcal{F}, L_{\lambda + \mu}) \rightarrow \Gamma(\mathcal{F}, L_\lambda)$ is $G$-equivariant, the image will be of weight $-\lambda$. By Borel-Weil, $\Gamma(\mathcal{F}, L_\lambda) \simeq V_\lambda^*$, so there is only one element of weight $-\lambda$ (up to scaling). But $f_\lambda$ is already of weight $-\lambda$ - so we get that $P_\mu f^{\lambda + \mu} = b_\mu(\lambda) f^\lambda$. 

Kashiwara further found an explicit formula: 
$$b_\mu(\lambda) = \prod_{\alpha \in R_+} \prod_{i = 1}^{h_\alpha(\mu)} (h_\alpha(\lambda) + h_\alpha(\rho) + i)$$
and that this is the generator of the ideal of all solutions to the functional equation.  
\end{proof}

One result was not stated by Kashiwara but which will be useful for us is the following: 
\begin{lemma}
For any $\mu, \nu$, $P_\mu P_\nu = P_{\mu + \nu}$.
\end{lemma}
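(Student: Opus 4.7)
The idea is to apply both $P_\mu P_\nu$ and $P_{\mu+\nu}$ to the section $f_G^{\lambda+\mu+\nu}$ and then use the uniqueness built into Kashiwara's construction to upgrade this to an equality of operators. Using Kashiwara's theorem twice, $P_\mu P_\nu f_G^{\lambda+\mu+\nu} = P_\mu\bigl(b_\nu(\lambda+\mu) f_G^{\lambda+\mu}\bigr) = b_\nu(\lambda+\mu)\, b_\mu(\lambda)\, f_G^\lambda$, and applying it once gives $P_{\mu+\nu} f_G^{\lambda+\mu+\nu} = b_{\mu+\nu}(\lambda)\, f_G^\lambda$. The two nested products in Kashiwara's explicit formula concatenate via $h_\alpha(\mu+\nu) = h_\alpha(\mu) + h_\alpha(\nu)$, so that $b_\nu(\lambda+\mu)\, b_\mu(\lambda) = b_{\mu+\nu}(\lambda)$. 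Hence $P_\mu P_\nu$ and $P_{\mu+\nu}$ act identically on every $f_G^{\lambda+\mu+\nu}$.

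To upgrade agreement on these sections to equality of operators, I would identify a canonical one-dimensional subspace of $\mathcal{D}^{\mu+\nu}(\mathcal{F})$ containing both. Since left translation satisfies $L_g(AB) = (L_g A)(L_g B)$, and the factors $P_\mu = v_\mu \otimes P$, $P_\nu = v_\nu \otimes Q$ are each left-$N$-invariant with left-$H$-weights $\mu$ and $\nu$ (because $v_\mu, v_\nu$ are highest weight vectors of those weights), the composition $P_\mu P_\nu$ is left-$N$-invariant of $H$-weight $\mu+\nu$. In the decomposition $\mathcal{D}^{\mu+\nu}(\mathcal{F}) = \bigoplus_\sigma V_\sigma \otimes W_\sigma^{\mu+\nu}$ with $W_\sigma^{\mu+\nu} = (V_\sigma^* \otimes U\mathfrak{g}/(U\mathfrak{g})\mathfrak{n})^{\mathfrak{b}_{-(\mu+\nu)}}$, the $N$-fixed vectors of left-$H$-weight $\mu+\nu$ can lie only in the summand $\sigma = \mu+\nu$, since $V_\sigma^N$ is the line spanned by the highest weight vector of weight $\sigma$. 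By the second Kashiwara theorem quoted above, $W_{\mu+\nu}^{\mu+\nu}$ is a free rank-one right $U\mathfrak{h}$-module generated by the second tensor factor of $P_{\mu+\nu}$, so that $P_\mu P_\nu = P_{\mu+\nu}\cdot h$ for a unique $h \in U\mathfrak{h}$.

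To finish, right multiplication by $h$ corresponds to composition with the right translation operator $R(h)$, and on the highest weight section $R(h) f_G^{\lambda+\mu+\nu} = h(\lambda+\mu+\nu)\, f_G^{\lambda+\mu+\nu}$, viewing $h \in U\mathfrak{h} = S\mathfrak{h}$ as a polynomial on $\mathfrak{h}^*$. Combining with the first step, $h(\lambda+\mu+\nu)\, b_{\mu+\nu}(\lambda) = b_{\mu+\nu}(\lambda)$ for every dominant $\lambda$; since $b_{\mu+\nu}$ is not identically zero and dominant weights are Zariski-dense in $\mathfrak{h}^*$, the polynomial $h$ must equal $1$, giving $P_\mu P_\nu = P_{\mu+\nu}$. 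The main obstacle I anticipate is the second step: carefully verifying that the left-$N$-fixed, left-$H$-weight-$(\mu+\nu)$ part of $\mathcal{D}^{\mu+\nu}(\mathcal{F})$ really collapses onto $v_{\mu+\nu} \otimes W_{\mu+\nu}^{\mu+\nu}$, and that composition of twisted operators is compatible with the $L_g$-action and with the right $U\mathfrak{h}$-module structure. The remaining work is a telescoping identity on $b$-functions and a single eigenvalue computation.
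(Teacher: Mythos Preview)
Your proof is correct and follows essentially the same strategy as the paper: pin down $P_\mu P_\nu$ inside the summand $v_{\mu+\nu}\otimes (V_{\mu+\nu}^*\otimes U\mathfrak{g}/(U\mathfrak{g})\mathfrak{n})^{\mathfrak{b}_{-(\mu+\nu)}}$, use the free rank-one $U\mathfrak{h}$-module structure to write $P_\mu P_\nu = P_{\mu+\nu}\cdot a$ with $a\in U\mathfrak{h}$, and then compare actions on $f_G^{\lambda+\mu+\nu}$ via Kashiwara's explicit $b$-function to force $a=1$. The only cosmetic difference is that the paper isolates the correct summand using an algebra filtration on $\mathbb{C}[G]=\bigoplus V_\nu\otimes V_\nu^*$ together with the left-$H$-weight, whereas you use left-$N$-invariance together with the left-$H$-weight; both arguments land in the same one-line subspace for the same reason.
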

\begin{proof}
Consider the decomposition of $\mathbb{C}[G]$ into $\oplus V_\mu \otimes V_\mu^*$ as an increasing algebra filtration, $\mathbb{C}[G]_{\leq \mu} = \oplus_{\nu \leq \mu} V_\nu \otimes V_\nu^*$ with $\mathbb{C}[G]_{\leq \mu} \mathbb{C}[G]_{\leq \nu} \subseteq \mathbb{C}[G]_{\leq \mu + \nu}$, where $\mu \leq \nu$ is the usual partial order on weights. Then by using the trivial filtration on $U\mathfrak{g}$, we obtain an increasing filtration on $\mathcal{D}(G) \simeq \mathbb{C}[G] \otimes U\mathfrak{g}$; as the action of $\mathfrak{g}$ respects the filtration on $\mathbb{C}[G]$, this filtration is an increasing algebra filtration. This filtration then descends to $\mathcal{D}(\mathcal{F})$, so we obtain an increasing filtration $\mathcal{D}_\mu^{\leq \eta}(\mathcal{F}) = \oplus_{\nu \leq \eta} V_\nu \otimes (V_\nu^* \otimes \frac{U \mathfrak{g}}{(U \mathfrak{g}) \mathfrak{n}})^{\mathfrak{b}_{-\mu}}$.

The highest weight in $\frac{U \mathfrak{g}}{(U \mathfrak{g}) \mathfrak{n}}$ is 0, so if $\nu < \mu$, then $\mathcal{D}_\mu^{\leq \nu} = \{0\}$, and $\mathcal{D}_\mu^{\leq \mu}(\mathcal{F}) = V_\mu \otimes (V_\mu^* \otimes \frac{U \mathfrak{g}}{(U \mathfrak{g}) \mathfrak{n}})^{\mathfrak{b}_{-\mu}}$. By definition, $P_\mu \in \mathcal{D}_\mu^{\leq \mu}(\mathcal{F})$. Therefore, $P_\mu P_\nu \in \mathcal{D}_{\mu + \nu}^{\leq (\mu + \nu)}(\mathcal{F})$.

Under left translation, $P_\mu$ acts as $v_\mu$, and therefore has weight $\mu$. Therefore, $P_\mu P_\nu$ has weight $\mu + \nu$. But $P_\mu P_\nu \in \mathcal{D}_{\mu + \nu}^{\leq \mu + \nu}(\mathcal{F}) = V_{\mu + \nu} \otimes (V_{\mu + \nu}^* \otimes \frac{U \mathfrak{g}}{(U \mathfrak{g}) \mathfrak{n}})^{\mathfrak{b}_{-\mu - \nu}}$, where left translation acts on the $V_{\mu + \nu}$. As $v_{\mu + \nu} \in V_{\mu + \nu}$ is the unique element of weight $\mu + \nu$ up to scaling, $P_\mu P_\nu$ can be expressed as $v_{\mu + \nu} \otimes P'$ for some $P' \in (V_\mu^* \otimes \frac{U \mathfrak{g}}{(U \mathfrak{g}) \mathfrak{n}})^{\mathfrak{b}_{-\mu}}$; similarly, $P_{\mu + \nu} = v_{\mu + \nu} \otimes P$. Then by Kashiwara's proof that $P$ generates $(V_\mu^* \otimes \frac{U \mathfrak{g}}{(U \mathfrak{g}) \mathfrak{n}})^{\mathfrak{b}_{-\mu}}$ over $U\mathfrak{h}$, $P' = P a$ for some $a \in U\mathfrak{h}$. Using Kashiwara's calculation, we can check that $P_{\mu + \nu} f^{\lambda + \mu + \nu} = P_\mu P_\nu f^{\lambda + \mu + \nu}$ for any $\lambda$, which means that $a = 1$ and $P' = P$ - so $P_\mu P_\nu = P_{\mu + \nu} $. 
\end{proof}

\section{Relationships among $P_\mu$}
We have proven that $P_\mu P_\nu = P_{\mu + \nu}$. This situation is common enough that it is useful to study it on its own; it will turn out to be true for the two relevant families of differential operators studied in this paper. This equation on differential operators implies the following equation on b-functions (with subscripts omitted because it works for either subscript):

\begin{equation} \label{Cocycle Equation}
b_\mu(\lambda) b_\nu(\lambda + \mu) = b_{\mu + \nu}(\lambda)
\end{equation}

which can be seen as a 1-cocycle equation in the bar resolution complex for the group cohomology of $\Lambda$, where the action of $\Lambda$ is on the multiplicative group $\mathbb{C}(\mathbb{C}\Lambda)^\times$ of rational functions (not including $0$) on $\mathbb{C}\Lambda$ by translation, $T_\mu(p)(\lambda) = p(\lambda + \mu)$.

\begin{lemma} \label{Hyperplane decomposition}
Let $\{b_\mu\}$ be a cocycle (i.e. satisfy the above equation) such that for $\mu \in \Omega$, $b_\mu$ is a polynomial (not just a rational function). Then for any $\mu \in \Omega$, $b_\mu$ can be expressed as $\prod_i (\alpha_i(\lambda) + k_i)$ with $\alpha_i \in Hom(\Lambda, \mathbb{Z}), k_i \in \mathbb{C}$. 
\end{lemma}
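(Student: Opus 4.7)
The plan is to exploit the cocycle equation together with the polynomiality condition to force every irreducible factor of $b_\mu$ to be linear with integer-valued normal.

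First, I reduce to the case when $\mu$ is a generator of the cone $\Omega$. Since $\Omega$ is finitely generated (as a rational polyhedral submonoid of $\Lambda$, by Gordan's lemma), choose generators $\mu_1,\ldots,\mu_r$ and write $\mu = \sum n_i \mu_i$. Iterating the cocycle relation gives an expression of $b_\mu(\lambda)$ as a product of translates $b_{\mu_i}(\lambda + c)$ for various constants $c \in \Lambda$. Since a factor of the form $\alpha(\lambda) + k$ stays of this form under translation of $\lambda$ (only $k$ changes, with $\alpha$ unchanged), it suffices to prove that each $b_{\mu_i}$ itself factors into hyperplane factors.

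Next, fix such a generator $\mu$ and let $p(\lambda)$ be an irreducible factor of $b_\mu(\lambda)$. For any $\nu \in \Omega$, the symmetric form of the cocycle equation $b_\mu(\lambda)\, b_\nu(\lambda+\mu) = b_\nu(\lambda)\, b_\mu(\lambda+\nu)$ says that $p(\lambda)$ divides $b_\nu(\lambda)\, b_\mu(\lambda+\nu)$ in the UFD $\mathbb{C}[\mathbb{C}\Lambda]$. Thus either $p \mid b_\nu$, or $p(\lambda - \nu) \mid b_\mu(\lambda)$. Iterating the second alternative and using that $b_\mu$ has only finitely many irreducible factors, there is some $k_\nu \in \mathbb{Z}_{\geq 0}$ with $p(\lambda - k_\nu \nu)$ dividing $b_\nu(\lambda)$. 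In particular the $\Lambda$-orbit of the hypersurface $V(p)$ fits coherently into the divisor structure of the entire family $\{b_\nu\}_{\nu \in \Omega}$.

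To extract linearity, I extend the cocycle to the full group $\Lambda$ by $b_{\mu-\nu}(\lambda) := b_\mu(\lambda)/b_\nu(\lambda + \mu - \nu)$ for $\mu, \nu \in \Omega$, which is well-defined by the cocycle equation. The polynomiality of $b_\nu$ for $\nu \in \Omega$ combined with the orbit analysis of the previous paragraph should then force the irreducible divisor $V(p)$ to be invariant up to translation under a full-rank sublattice of $\Lambda$, which in turn forces $V(p)$ to be an affine hyperplane $\alpha(\lambda) + k = 0$ with $\alpha \in \mathrm{Hom}(\Lambda, \mathbb{Z})$. The main obstacle is precisely this final step: turning the combinatorial orbit constraint into a rigorous proof that the only possible irreducible factors are integer-linear forms, since a priori one would need to rule out higher-degree hypersurfaces that only accidentally satisfy all the divisibility constraints. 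I expect either a growth/degree argument comparing $\deg b_{k\mu}$ with the number of distinct shifts of $p$ appearing in its factorization, or an appeal to the Sabbah--Gyoja result applied to generic specializations, will close this gap.
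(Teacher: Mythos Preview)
Your opening moves---the symmetric form of the cocycle equation, the UFD dichotomy ``$p\mid b_\nu$ or $T_{-\nu}p\mid b_\mu$'', and the finiteness of irreducible factors---are exactly the paper's. But the argument does not close, and the point where it breaks is the one you yourself flag.

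First, a small omission: your iteration ``either $p(\lambda-k\nu)\mid b_\nu$ for some $k$'' is missing the alternative that the iteration never escapes $b_\mu$, i.e.\ $T_{-j\nu}p\mid b_\mu$ for all $j\ge 0$, which by finiteness forces $T_{-\nu}p=p$ (polynomial periodicity $\Rightarrow$ invariance). More seriously, the conclusion you reach---that some translate of $p$ divides $b_\nu$---is not a periodicity statement about $p$ and does not by itself constrain the geometry of $V(p)$. Your claim that this yields invariance of $V(p)$ under a \emph{full-rank} sublattice cannot be right: full-rank translation invariance of a polynomial makes it constant. What is needed is invariance under a corank-$1$ sublattice, and your outline does not produce any such direction of invariance. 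The closing appeal to a degree count or to Sabbah--Gyoja is not a proof.

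The paper's missing step is to iterate \emph{back} into $b_\mu$. Starting from $p\mid b_\mu$ one shows, for each $\nu$, that either $T_{a\mu}p\mid T_\nu b_\mu$ for some $a\ge 0$ or $p$ is $\mu$-invariant; feeding $T_{a\mu-\nu}p\mid b_\mu$ back into the same machine and again using that $b_\mu$ has finitely many factors, one obtains integers $m,n$ (not both zero) with $T_{m\mu-n\nu}p=p$. Now take $\nu$ to run over a set of generators $\beta_i$ of $\Lambda$, fix one $\beta_{i_0}$ along which $p$ is not invariant, and for every other $i$ obtain $T_{m_i\beta_{i_0}-n_i\beta_i}p=p$ with $n_i\neq 0$. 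These directions span a corank-$1$ subspace of $\mathbb{C}\Lambda$ on which $p$ is translation-invariant, so $p$ is a function of a single integer linear form $\alpha$; irreducibility then gives $p=\alpha(\lambda)+k$. Your reduction to generators of $\Omega$ is harmless but unnecessary: the argument works for any $\mu\in\Omega$ directly.
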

Similar results have been found for b-functions of multiple functions; see Sabbah, theorem 4.2.1 \cite{SABBAH} and Gyoja \cite{GYOJA}.

\begin{proof}
From the cocycle equation, we can switch $\mu, \nu$ to get that $b_\mu T_\mu(b_\nu) = b_\nu T_\nu(b_\mu)$. 

Assume $p|b_\mu$ for some irreducible $p$; we need to prove that $p$ is of the form $\alpha(\lambda) + k$ for some $\alpha \in Hom(\Lambda, \mathbb{Z}), k \in \mathbb{C}$. We first check that for any $\nu \in \Omega$, either there is a nonnegative integer $a$ such that $p|T_{\nu - a \mu} b_\mu$ or $p$ is invariant under translation by $\mu$.

By the second equation, we know that $p | b_\nu T_\nu b_\mu$. As polynomials over $\mathbb{C}\Lambda$ form a UFD and $p$ is irreducible, we know that either $p|b_\nu$ or $p|T_\nu b_\mu$. In the latter case, we are done, so assume we are in the former case. Then by translation, we also have that $T_\mu p|T_\mu b_\nu|b_\mu T_\mu b_\nu = b_\nu T_\nu b_\mu$. By repeating this, we either have infinitely many $a$ such that $p(\lambda + a\mu)|b_\nu(\lambda)$, or for some $a$ that $T_{a \mu} p|T_\nu b_\mu$. As before, assume we are in the former case. Then as $b_\nu(\lambda)$ has finitely many factors, there can only be finitely many distinct $T_{a \mu} p$; therefore they must be equal for some $a_0, a_1$, and therefore, $T_{(a_1 - a_0) \mu} p = p$. This means that for any $\lambda$, the function $t \rightarrow p(\lambda + t \mu)$ is periodic. But $p$ is a polynomial, so the function is a polynomial, which can only be periodic if it is constant, so $p(\lambda + t \mu) = p(\lambda)$ for any $t$, and $p$ is invariant under translation by any multiple of $\mu$. 

Assume we are in the first case, that is, that $T_{a \mu} p | T_\nu b_\mu$. Then we know that $T_{a \mu - \nu} p|b_\mu$. As the former is a translation of $p$, it is also irreducible - so we can repeat the above process. As $b_\mu$ has finitely many factors, there can only be finitely many distinct such translations; therefore, for some $m_0 < m_1, n_0 < n_1$ we get that $T_{m_1 \mu - n_1 \nu} p = T_{m_0 \mu - n_0 \nu} p$. Therefore there are integers $m, n$ with $n > 0$ such that $T_{m \mu - n \nu} p = p$. Dropping the assumption that $T_{a \mu} p | T_\nu b_\mu$ and going back to the general case (i.e. allowing $T_{a \mu} p = p$ as in the above paragraph), we get that there are integers $m, n$ nonnegative and not both $0$ such that $T_{m \mu - n \nu} p = p$. 

Choose generators $\beta_i$ of $\Lambda$; then as $p$ is a nonconstant polynomial, for some $i_0$ we get that $T_{\beta_{i_0}} p \neq p$. Then for all $i \neq i_0$, we know that there are $m_i, n_i$ such that $p(\lambda + m_i \beta_{i_0} - n_i \beta_i) = p(\lambda)$. Hence $n_i \neq 0$. For any ${\mu \in span(\{m_i \beta_{i_0} - n_i \beta_i\})}$, we have that $p(\lambda + \mu) = p(\lambda)$. This is a subspace of codimension 1 in $\mathbb{C}\Lambda$, so there is some $\alpha \in Hom(\Lambda, \mathbb{C})$ such that $\alpha(\mu) = 0$ if and only if $p(\lambda + \mu) = p(\lambda)$ for all $\lambda$; further, it's clear that $\alpha \in Hom(\Lambda, \mathbb{Z})$. We then have that for any $\mu$ such that $\langle \alpha, \mu \rangle = 0$, $p(\lambda + \mu) = p(\lambda)$. Choose a point $\lambda_0$ such that $p(\lambda_0) = 0$. Then for any point where $\alpha(\lambda) - \alpha(\lambda_0) = 0$, we have that $p(\lambda) = p(\lambda + (\lambda_0 - \lambda)) = 0$. But $\alpha(\lambda) - \alpha(\lambda_0)$ is an irreducible polynomial, so $\alpha(\lambda) - \alpha(\lambda_0)$ divides $p$. Since $p$ is irreducible, $p = \alpha(\lambda) - \alpha(\lambda_0)$. Setting $k = -\alpha(\lambda_0)$, we have proven the lemma.

\end{proof}

\begin{remark}
A useful way to think about this result is to consider the bar resolution complex in which the b-function is a 1-cocycle. A generalization of this proof can be used to give the first cohomology group of the complex. We can then provide a "lift" $A(\lambda)$ for each cocycle in a canonical way as a product of gamma functions of rational hyperplanes, such that $b_\mu(\lambda) = \frac{A(\lambda + \mu)}{A(\lambda)}, A(0) = 1$. For any $\mu \in \Lambda$, we then have that $b_\mu(0) = \frac{A(\mu)}{A(0)} = A(\mu)$. The following corollaries are then natural. 
\end{remark}

Consider $\alpha \in Hom(\Lambda, \mathbb{Z})$ positive and indivisible (that is, there is no nontrivial $a \in \mathbb{Z}$ with $\frac{\alpha}{a} \in Hom(\Lambda, \mathbb{Z})$, and for any $\mu \geq 0$, $\langle \alpha, \mu \rangle \geq 0$). For each $\mu \in \Lambda^+$, let $K_{\alpha, \mu} = \{k s.t. (\alpha(\mu) + k)|b_\mu\}$ with multiplicity. Then for any $\mu, \nu \in \Lambda^+$

\begin{equation}
K_{\alpha, \mu} \cup (K_{\alpha, \nu} + \langle \alpha, \mu\rangle) = K_{\alpha, \nu} \cup (K_{\alpha, \mu} + \langle \alpha, \nu\rangle)
\end{equation}

where $A + k = \{a + k|a \in A\}$; this follows directly from unique factorization and the cocycle equation. 

\begin{corollary} \label{Source Corollary}
Let $\alpha \neq 0 \in Hom(\Lambda, \mathbb{Z})$. Then there is some $c \in \mathbb{Z}_{\geq 0}$ such that $|K_{\alpha, \mu}| = c \langle \alpha, \mu \rangle$.
\end{corollary}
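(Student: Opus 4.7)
The plan is to pass from the multiset identity displayed just above the corollary to a scalar identity by summing the elements on both sides. Writing $T(\mu) = \sum_{k \in K_{\alpha, \mu}} k$ for the sum over $K_{\alpha, \mu}$ counted with multiplicity, the sum over the left-hand side $K_{\alpha, \mu} \cup (K_{\alpha, \nu} + \langle \alpha, \mu\rangle)$ equals $T(\mu) + T(\nu) + |K_{\alpha, \nu}| \langle \alpha, \mu\rangle$, while the sum over the right-hand side $K_{\alpha, \nu} \cup (K_{\alpha, \mu} + \langle \alpha, \nu\rangle)$ equals $T(\nu) + T(\mu) + |K_{\alpha, \mu}| \langle \alpha, \nu\rangle$. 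Cancelling gives the key scalar identity
$$|K_{\alpha, \nu}| \langle \alpha, \mu\rangle = |K_{\alpha, \mu}| \langle \alpha, \nu\rangle \qquad \text{for all } \mu, \nu \in \Lambda^+.$$

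Next I would fix some $\mu_0 \in \Lambda^+$ with $\langle \alpha, \mu_0\rangle > 0$; such a $\mu_0$ exists because $\alpha$ is positive, nonzero, and every element of $\Lambda$ is a difference of elements of $\Lambda^+$, so $\alpha$ cannot vanish identically on $\Lambda^+$. Define $c := |K_{\alpha, \mu_0}| / \langle \alpha, \mu_0\rangle \in \mathbb{Q}_{\geq 0}$. Specializing the identity above to $\mu = \mu_0$ and dividing by $\langle \alpha, \mu_0\rangle$ yields $|K_{\alpha, \nu}| = c\langle \alpha, \nu\rangle$ whenever $\langle \alpha, \nu\rangle > 0$; for $\nu$ with $\langle \alpha, \nu\rangle = 0$, the same identity forces $|K_{\alpha, \nu}| = 0$, so the formula holds uniformly on $\Lambda^+$.

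Finally I would promote $c$ from a nonnegative rational to a nonnegative integer, using the indivisibility hypothesis. Since $\alpha$ is indivisible in $Hom(\Lambda, \mathbb{Z})$, there exists $\mu \in \Lambda$ with $\langle \alpha, \mu\rangle = 1$; writing $\mu = \mu^+ - \mu^-$ with $\mu^\pm \in \Lambda^+$, we obtain
$$c = c\langle \alpha, \mu^+\rangle - c\langle \alpha, \mu^-\rangle = |K_{\alpha, \mu^+}| - |K_{\alpha, \mu^-}| \in \mathbb{Z},$$
which combined with $c \geq 0$ gives $c \in \mathbb{Z}_{\geq 0}$.

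I do not anticipate a real obstacle. The only substantive move is the summation trick in the first step, which turns an equality of multisets (uninformative at the level of cardinality alone, since both sides have cardinality $|K_{\alpha,\mu}| + |K_{\alpha,\nu}|$) into a proportionality between $|K_{\alpha, \cdot}|$ and $\langle \alpha, \cdot\rangle$. Comparing base points and extracting integrality from indivisibility are then routine.
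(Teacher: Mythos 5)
Your argument is essentially the paper's: both proofs sum the multiset identity preceding the corollary (the $T(\mu)$ terms cancel) to obtain the scalar proportionality $|K_{\alpha,\nu}|\langle\alpha,\mu\rangle = |K_{\alpha,\mu}|\langle\alpha,\nu\rangle$, then fix a base point $\mu_0$ with $\langle\alpha,\mu_0\rangle > 0$ and define $c$. The only difference is that the paper dismisses the final integrality step in one clause (``as $\alpha$ is indivisible and positive, $c$ must be a nonnegative integer''), whereas you spell out the mechanism: indivisibility of $\alpha$ gives a $\mu\in\Lambda$ with $\langle\alpha,\mu\rangle = 1$, and writing $\mu = \mu^+ - \mu^-$ with $\mu^\pm\in\Lambda^+$ expresses $c$ as a difference of integers. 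That extra detail is a genuine improvement in rigor, since the paper's phrasing could be read as circular (it re-derives the defining formula for $c$ rather than the claimed $|K_{\alpha,\mu}| = c\langle\alpha,\mu\rangle$), but it is the same proof.
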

\begin{proof}
Let $\nu \in \Lambda^+, \langle \alpha, \nu\rangle \neq 0$, and let $c = \frac{|K_{\alpha, \nu}|}{\langle \alpha, \nu} \rangle$. Then for any $\mu \in \Lambda^+$, by summing the $K$ equation over the sets with multplicity, we get that:
\begin{equation}
\sum K_{\alpha, \mu} + \sum K_{\alpha, \nu} + |K_{\alpha, \nu}| \langle \alpha, \mu\rangle = \sum K_{\alpha, \nu} + \sum K_{\alpha, \mu} + |K_{\alpha, \mu}| \langle \alpha, \nu\rangle. 
\end{equation}
We can subtract and divide to get $c = \frac{|K_{\alpha, \nu}|}{\langle \alpha, \nu\rangle}$; then as $\alpha$ is indivisible and positive, $c$ must be a nonnegative integer.
\end{proof}

\begin{corollary} \label{Coefficient Zero}
For any $\alpha \neq 0 \in Hom(\Lambda, \mathbb{Z}), \mu \in \Lambda$ with $\langle \alpha, \mu\rangle = 0$, $K_{\alpha, \mu} = \emptyset$. 
\end{corollary}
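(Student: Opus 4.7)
The plan is to derive this as an essentially immediate consequence of the preceding Corollary \ref{Source Corollary}. For $\alpha$ positive and indivisible, that corollary supplies a constant $c \in \mathbb{Z}_{\geq 0}$ with $|K_{\alpha, \mu}| = c \langle \alpha, \mu \rangle$ for every $\mu \in \Lambda^+$; substituting the hypothesis $\langle \alpha, \mu \rangle = 0$ forces $|K_{\alpha, \mu}| = 0$, so the multiset $K_{\alpha, \mu}$ is empty.

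For an $\alpha$ not already in positive indivisible form, I would prefer to redo the argument directly from the multiset cocycle identity written just before Corollary \ref{Source Corollary}, namely
\begin{equation*}
K_{\alpha, \mu} \cup (K_{\alpha, \nu} + \langle \alpha, \mu\rangle) = K_{\alpha, \nu} \cup (K_{\alpha, \mu} + \langle \alpha, \nu\rangle).
\end{equation*}
Under the hypothesis $\langle \alpha, \mu\rangle = 0$, canceling the common multiset $K_{\alpha, \nu}$ from both sides yields the translation invariance $K_{\alpha, \mu} = K_{\alpha, \mu} + \langle \alpha, \nu\rangle$ as multisets of complex numbers. Because $\alpha \neq 0$ and the dominant cone spans the whole lattice $\mathbb{C}\Lambda$, I can choose some $\nu \in \Lambda^+$ with $\langle \alpha, \nu\rangle \neq 0$; but a finite multiset cannot be preserved under a nonzero translation, so $K_{\alpha, \mu}$ must in fact be empty.

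There is no substantive obstacle here: the statement is a one-line consequence of the cocycle multiset identity once the correct $\nu$ is in hand. The only bookkeeping step is verifying that some $\nu \in \Lambda^+$ pairs nontrivially with $\alpha$, which is immediate from the fact that $\Lambda^+$ generates $\Lambda$ as an abelian group. I would present the direct cocycle argument in the paper, since it handles general $\alpha$ without requiring the positivity/indivisibility hypothesis that was in force for Corollary \ref{Source Corollary}.
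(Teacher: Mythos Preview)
Your proposal is correct, and the first argument you give is exactly what the paper intends: the corollary is stated without proof immediately after Corollary \ref{Source Corollary}, so the paper is treating it as the one-line specialization $|K_{\alpha,\mu}| = c\langle\alpha,\mu\rangle = 0$. Your second, direct argument from the multiset cocycle identity is a valid alternative that is slightly more self-contained (it does not need the integrality or positivity of $c$), but it is not a genuinely different route---it is the same cancellation-and-translation idea that underlies the proof of Corollary \ref{Source Corollary}, just applied with the roles of $\mu$ and $\nu$ specialized.
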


\begin{corollary} \label{Coefficients Equal}
Let $\mu, \nu \in \Lambda^+$, and let $\alpha \in Hom(\Lambda, \mathbb{Z})$ such that $\alpha(\mu) = \alpha(\nu) = a$. Then $K_{\alpha, \mu} = K_{\alpha, \nu}$.
\end{corollary}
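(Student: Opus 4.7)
The plan is to read the statement off the multiset identity
$$K_{\alpha, \mu} \cup (K_{\alpha, \nu} + \langle \alpha, \mu\rangle) = K_{\alpha, \nu} \cup (K_{\alpha, \mu} + \langle \alpha, \nu\rangle)$$
that was established just before Corollary \ref{Source Corollary} from unique factorization and the cocycle equation. Substituting the hypothesis $\langle \alpha, \mu\rangle = \langle \alpha, \nu\rangle = a$ collapses this to
$$K_{\alpha, \mu} \cup (K_{\alpha, \nu} + a) = K_{\alpha, \nu} \cup (K_{\alpha, \mu} + a),$$
so the corollary reduces to the purely combinatorial claim that this equation forces $K_{\alpha, \mu} = K_{\alpha, \nu}$.

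First I would dispose of the degenerate case $a = 0$: Corollary \ref{Coefficient Zero} immediately gives $K_{\alpha, \mu} = \emptyset = K_{\alpha, \nu}$, and there is nothing further to prove.

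For $a \neq 0$ I would encode each finite multiset $A \subset \mathbb{C}$ as a formal sum $[A] = \sum_{k \in A} [k]$ in the free abelian group $\mathbb{Z}[\mathbb{C}]$, so that multiset union becomes addition and shifting by $a$ becomes the translation endomorphism $T_a$. The identity above rewrites as
$$(1 - T_a)\bigl([K_{\alpha, \mu}] - [K_{\alpha, \nu}]\bigr) = 0,$$
and since $T_a$ acts on $\mathbb{C}$ without fixed points when $a \neq 0$, any finitely supported element of $\mathbb{Z}[\mathbb{C}]$ fixed by $T_a$ must be zero (each $T_a$-orbit on $\mathbb{C}$ is infinite, so it cannot carry a finite, nonzero total multiplicity). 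Hence $[K_{\alpha, \mu}] = [K_{\alpha, \nu}]$, which is precisely the desired equality of multisets. One could equivalently phrase this step by packaging the multisets into Laurent-style generating sums and cancelling the nonzero factor $(T_a - 1)$ in the group algebra $\mathbb{Z}[\mathbb{C}]$, which is an integral domain.

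The whole argument is essentially bookkeeping once the hyperplane-factor identity preceding Corollary \ref{Source Corollary} is in hand, and I do not expect a real obstacle; the only substantive point is injectivity of $1 - T_a$ for $a \neq 0$, which boils down to the elementary observation that a finite multiset in $\mathbb{C}$ cannot be invariant under a nonzero translation. In particular, unlike Corollary \ref{Source Corollary}, the present statement does not even require $\alpha$ to be positive or indivisible — only that $a = \langle\alpha,\mu\rangle = \langle\alpha,\nu\rangle$ be a well-defined common value.
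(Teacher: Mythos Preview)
Your proof is correct and takes essentially the same approach as the paper: both pass to the free abelian group on $\mathbb{C}$, rewrite the multiset identity as $(T_a - 1)$ applied to the difference $[K_{\alpha,\mu}] - [K_{\alpha,\nu}]$, and invoke injectivity of $T_a - 1$ on finitely supported elements. Your explicit treatment of the case $a = 0$ via Corollary~\ref{Coefficient Zero} is a small improvement, since the paper's version tacitly assumes $a \neq 0$.
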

\begin{proof}
Assume otherwise. If we allow 

$$(K_{\alpha, \mu} + a) - K_{\alpha, \mu} = (K_{\alpha, \nu} + a) - K_{\alpha, \nu}$$

But the map $S \rightarrow (S + a) - S$ is a $\mathbb{Z}$-linear map where every element of the kernel has infinitely many elements, and both $K_{\alpha, \mu}$ and $K_{\alpha, \nu}$ are finite - so they must be the same.
\end{proof}

\section{The differential operator for $f_G$}
We now want to find the differential operators $P_{\mu, G^3}$ that give us the functional equation 
\begin{equation}
P_{\mu, G^3} f_{G^3}^{\lambda + \mu} = b_{\mu, G^3}(\lambda) f_{G^3}^\lambda
\end{equation}

In order to do this, we look first for the b-function for $f_K$, the $B_1 \times B_2 \times P$-semiinvariant section, using Kashiwara's calculation, and then figure out the relationship between $b_{\mu, G^3}$ and $b_{\mu, K}$. As $f_K = f_G \otimes f_G \otimes f_{\mathbb{P}}$ is a product of three functions on the three factors of $X$, we can set $P_{\mu, K} = P_{G, \mu_1} \otimes P_{G, \mu_2} \otimes P_{\mathbb{P}, m}$ and get the functional equation
\begin{multline*}
P_{\mu, K} f_K^{\lambda + \mu} = (P_{G, \mu_1} f_G^{\lambda_1 + \mu_1}) \otimes (P_{G, \mu_2} f_G^{\lambda_2 + \mu_2}) \otimes (P_{\mathbb{P}, m} f_{\mathbb{P}}^{l + m})\\
= b_{G, \mu_1}(\lambda_1) b_{G, \mu_2}(\lambda_2) b_{\mathbb{P}, m}(l) f_K^\lambda
\end{multline*}
where $b_{\mathbb{P}, m}$ is the b-function for a hyperplane in $\mathbb{P}^{n - 1}$. It is easy to see that $b_{\mathbb{P}, m}(l) = \prod_{i = 1}^m (l + i)$; combining that with the calculation in section 5, we get that:

\begin{equation}
P_{\mu, K} f_K^{\lambda + \mu} = b_{\mu, K}(\lambda) f_K^{\lambda}
\end{equation}
where 
$$b_{\mu, K} = \prod_{\alpha \in R_+} \prod_{i = 1}^{h_\alpha{\mu_1}} (h_\alpha(\lambda_1) + h_\alpha(\rho) + i) \prod_{j = 1}^{h_\alpha{\mu_2}} (h_\alpha(\lambda_2) + h_\alpha(\rho) + j) \prod_{k = 1}^m (l + i)$$

As we now have $b_{\mu, K}$, we only need to determine the relationship between the two b-functions. 

\begin{lemma}
Define $H(\mu) = \langle u_{-\mu}, u_\mu\rangle$. Then $u_\mu = H(\mu) v_\mu^G, u_{-\mu} = H(\mu) v_{-\mu}^G$.
\end{lemma}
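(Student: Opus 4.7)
The plan is to interpret $v_\mu^G$ as the image of $v_\mu$ under the canonical projection onto the (one-dimensional) invariant subspace $V_\mu^G = \mathbb{C}\,u_\mu$, and then to pin down the scalar of proportionality by pairing with $u_{-\mu}$. For $\mu \in \Omega$, complete reducibility of the $G$-action gives a canonical decomposition $V_\mu = V_\mu^G \oplus W$, where $W$ is a $G$-stable complement containing no trivial summand. Writing $v_\mu = v_\mu^G + v_\mu'$ with $v_\mu^G \in V_\mu^G$ and $v_\mu' \in W$ takes this as the definition of $v_\mu^G$; one-dimensionality of $V_\mu^G$ (which is implicit in the definition of $u_\mu$ and of $\Omega$) then forces $v_\mu^G = c\,u_\mu$ for some $c \in \mathbb{C}$.

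To compute $c$, I would pair with $u_{-\mu}$. Since $u_{-\mu}$ is $G$-invariant, the functional $\langle u_{-\mu}, \cdot\rangle : V_\mu \to \mathbb{C}$ is $G$-equivariant into the trivial representation; by Schur's lemma it vanishes on every $G$-submodule with no trivial summand, hence on $W$. Therefore
\[
1 \;=\; \langle u_{-\mu}, v_\mu\rangle \;=\; \langle u_{-\mu}, v_\mu^G\rangle \;=\; c\,\langle u_{-\mu}, u_\mu\rangle \;=\; c\,H(\mu),
\]
using the normalization $\langle u_{-\mu}, v_\mu\rangle = 1$ from the notation section together with the fact that $H(\mu) = \langle u_{-\mu}, u_\mu\rangle$ (which follows from the definition of $H$ and the three normalizations $\langle v_{-\mu}, v_\mu\rangle = \langle v_{-\mu}, u_\mu\rangle = \langle u_{-\mu}, v_\mu\rangle = 1$). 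Thus $c = 1/H(\mu)$, giving $u_\mu = H(\mu)\,v_\mu^G$.

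The dual identity $u_{-\mu} = H(\mu)\,v_{-\mu}^G$ follows by the symmetric argument inside $V_\mu^*$: decompose $V_\mu^* = (V_\mu^*)^G \oplus W^*$, project $v_{-\mu}$ onto $(V_\mu^*)^G = \mathbb{C}\,u_{-\mu}$ to obtain $v_{-\mu}^G = d\,u_{-\mu}$, and pair against $u_\mu$. The $G$-invariance of $u_\mu$ makes it vanish on $W^*$, so $1 = \langle v_{-\mu}, u_\mu\rangle = \langle v_{-\mu}^G, u_\mu\rangle = d\,\langle u_{-\mu}, u_\mu\rangle = d\,H(\mu)$, whence $d = 1/H(\mu)$.

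I do not expect any real obstacle: the only substantive point is to parse the notation $v_\mu^G$ as the canonical invariant projection, after which the argument is standard invariant theory using only one-dimensionality of $V_\mu^G$ and the Schur-lemma vanishing of $G$-invariant functionals on $G$-summands with no trivial part.
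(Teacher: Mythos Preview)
Your proof is correct and is essentially the same argument as the paper's: both note that $v_\mu^G$ and $u_\mu$ differ by a scalar (one-dimensionality of $V_\mu^G$), then determine that scalar by pairing with the $G$-invariant $u_{-\mu}$, using $\langle u_{-\mu}, v_\mu^G\rangle = \langle u_{-\mu}, v_\mu\rangle = 1$. The only cosmetic difference is that the paper writes $u_\mu = c\,v_\mu^G$ and solves $H(\mu)=c$, whereas you write $v_\mu^G = c\,u_\mu$ and solve $c=1/H(\mu)$; your more explicit justification via the decomposition $V_\mu = V_\mu^G \oplus W$ and Schur-vanishing on $W$ just spells out what the paper leaves implicit.
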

\begin{proof}
Because $u_\mu$ is the unique $G$-invariant up to scaling, $u_\mu = c v_\mu^G$ for some $c$. Then $H(\mu) = \langle u_{-\mu}, u_\mu\rangle = \langle u_{-\mu}, c v_\mu^G\rangle = c \langle u_{-\mu}, v_\mu \rangle = c$, so $u_\mu = H(\mu) v_\mu^G$. Similarly, $u_{-\mu} = H(\mu) v_{-\mu}^G$. 
\end{proof}

Let $P_{\mu, G} = u_\mu \otimes P \in \mathcal{D}_\mu(X)$. Then as $P_{\mu, K} = v_\mu \otimes P$, the last lemma shows that $P_{\mu, G^3} = H(\mu) P_{\mu, K}^G$. 

\begin{lemma}
$P_{\mu, G^3} f_{G^3}^{\lambda + \mu} = b_{\mu, G^3}(\lambda) f_{G^3}^\lambda$ where $b_{\mu, G^3}(\lambda) = \frac{H(\lambda + \mu)}{H(\lambda)} b_{\mu, K}$
\end{lemma}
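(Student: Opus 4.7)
The plan is to reduce the comparison of $b_{\mu, G^3}(\lambda)$ and $b_{\mu, K}(\lambda)$ to an identity about the Cartan projection of diagonally $G$-invariant vectors. First, I would encode the action of all operators $v \otimes P$ (as $v$ ranges over $V_\mu$) through a single $G^3$-equivariant map $\Psi \colon V_\mu \otimes V_{\lambda+\mu}^* \to V_\lambda^*$, defined so that the Borel--Weil section associated to $w \in V_{\lambda+\mu}^*$ is sent by $v \otimes P$ to the section associated to $\Psi(v \otimes w)$; equivariance follows from the fact that the left translation action on $v \otimes P$ passes to the $V_\mu$ factor. Kashiwara's theorem then reads $\Psi(v_\mu \otimes v_{-(\lambda+\mu)}) = b_{\mu,K}(\lambda)\, v_{-\lambda}$. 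Moreover, $P_{\mu, G^3} f_{G^3}^{\lambda+\mu}$ is automatically some scalar $b_{\mu, G^3}(\lambda) f_{G^3}^\lambda$ since both factors are diagonally $G$-invariant and the space of diagonal-$G$-invariant sections of $L_\lambda$ is one-dimensional; evaluating at the identity in $G^3$ identifies this scalar with $\langle \Psi(u_\mu \otimes u_{-(\lambda+\mu)}), v_\lambda\rangle$.

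Next, I would package both $b$-functions as values of the trilinear form $\tau(v, w, y) = \langle \Psi(v \otimes w), y\rangle$ on $V_\mu \otimes V_{\lambda+\mu}^* \otimes V_\lambda$, which is diagonally $G^3$-invariant. The space of such invariants is one-dimensional: writing $V_\mu \otimes V_\lambda$ by tensor factor as $(V_{\mu_1} \otimes V_{\lambda_1}) \otimes (V_{\mu_2} \otimes V_{\lambda_2}) \otimes (\mathrm{Sym}^m \otimes \mathrm{Sym}^l)$, each piece contains its Cartan component with multiplicity one, so $V_{\lambda+\mu}$ appears in $V_\mu \otimes V_\lambda$ exactly once. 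Hence $\tau$ is proportional to the canonical Cartan form $\tau_0(v, w, y) = \langle w, \iota(v \otimes y)\rangle$, where $\iota \colon V_\mu \otimes V_\lambda \to V_{\lambda+\mu}$ is the Cartan projection normalized by $\iota(v_\mu \otimes v_\lambda) = v_{\lambda+\mu}$. Plugging in $\tau(v_\mu, v_{-(\lambda+\mu)}, v_\lambda) = b_{\mu, K}(\lambda)$ fixes the proportionality $\tau = b_{\mu, K}(\lambda)\, \tau_0$, so the lemma reduces to the numerical identity $\tau_0(u_\mu, u_{-(\lambda+\mu)}, v_\lambda) = H(\lambda+\mu)/H(\lambda)$.

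For that evaluation I would use the diagonal $G$-invariance of $u_{-(\lambda+\mu)}$ to replace $v_\lambda$ in the pairing by its $G$-projection $v_\lambda^G = u_\lambda/H(\lambda)$, reducing the claim to $\langle u_{-(\lambda+\mu)}, \iota(u_\mu \otimes u_\lambda)\rangle = H(\lambda+\mu)$. The heart of the matter is the Cartan identity $\iota(u_\mu \otimes u_\lambda) = u_{\lambda+\mu}$, which I would prove by pairing with $v_{-(\lambda+\mu)}$ and transposing: $\langle v_{-(\lambda+\mu)}, \iota(u_\mu \otimes u_\lambda)\rangle = \langle \iota^T(v_{-(\lambda+\mu)}), u_\mu \otimes u_\lambda\rangle$, and the dual Cartan inclusion $\iota^T$ sends $v_{-(\lambda+\mu)}$ to $v_{-\mu} \otimes v_{-\lambda}$ (this is the dual of $\iota(v_\mu \otimes v_\lambda) = v_{\lambda+\mu}$, or equivalently Kashiwara's multiplicativity $f_K^\mu f_K^\lambda = f_K^{\lambda+\mu}$ read at general $g$). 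The pairing then reduces to $\langle v_{-\mu}, u_\mu\rangle\langle v_{-\lambda}, u_\lambda\rangle = 1$, giving $\iota(u_\mu \otimes u_\lambda) = u_{\lambda+\mu}$, and combining with $\langle u_{-(\lambda+\mu)}, u_{\lambda+\mu}\rangle = H(\lambda+\mu)$ closes the proof.

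The main obstacle is establishing the Cartan identity $\iota(u_\mu \otimes u_\lambda) = u_{\lambda+\mu}$; once this multiplicativity of diagonally $G$-invariant vectors under the Cartan projection is in hand, the lemma follows by a formal unwinding of $G^3$-equivariance and the trilinear-form uniqueness above.
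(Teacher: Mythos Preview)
Your proposal is correct and follows essentially the same route as the paper. Both arguments encode the action of $v\otimes P$ as a $G^3$-equivariant map $V_\mu\otimes V_{\lambda+\mu}^*\to V_\lambda^*$, use multiplicity one of the Cartan component to identify it (up to the scalar $b_{\mu,K}$) with the Cartan projection $\iota\colon V_\mu\otimes V_\lambda\to V_{\lambda+\mu}$, and then invoke $\iota(u_\mu\otimes u_\lambda)=u_{\lambda+\mu}$. The only cosmetic difference is that the paper phrases the last step as ``$\iota$ is the multiplication map on $\Gamma(X',L_{-\bullet})$ via Borel--Weil, and $G$-invariant sections multiply,'' whereas you verify it by the transpose computation $\iota^T(v_{-(\lambda+\mu)})=v_{-\mu}\otimes v_{-\lambda}$; you yourself note these are equivalent.
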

\begin{proof}
Consider differential operators of the form $V_\mu \otimes {P}$, and the restriction of the map of application of differential operators $\mathcal{D}_\mu(X) \otimes \Gamma(X, L_{\lambda + \mu}) \rightarrow \Gamma(X, L_\lambda)$ to only include such differential operators. Then this is isomorphic to a $G \times G \times G$-equivariant map $V_\mu \otimes V_{\lambda + \mu}^* \rightarrow V_\lambda^*$, where the isomorphism takes $P_{\mu, K} \rightarrow v_\mu, P_{\mu, G} \rightarrow u_\mu, f_K^{\lambda} \rightarrow v_{-\lambda}, f_{G}^\lambda \rightarrow u_{-\lambda}$. But up to scaling, there is only one such map. Under this interpretation, we have $v_\mu \otimes v_{-\lambda - \mu} \rightarrow b_{K, \mu}(\lambda) v_{-\lambda}$ and $u_\mu \otimes u_{-\lambda - \mu} \rightarrow b_{G, \mu}(\lambda) u_{-\lambda}$. By pairing with the respective vectors and considering the fact that this map is $G \times G \times G$-invariant, we can think of this as a map $V_\mu \otimes V_\lambda \rightarrow V_{\mu + \lambda}$ that takes $v_\mu \otimes v_\lambda \rightarrow v_{\mu + \lambda}, H(\lambda + \mu) u_\mu \otimes u_\lambda \rightarrow H(\lambda) b_{G, \mu}(\lambda) u_{\lambda + \mu}$. 

But by Borel-Weil, we also have an isomorphism between $\Gamma(X', L_{-\lambda})$ and $V_\lambda$; as such, we have the multiplication map on sections. The multiplication map takes $v_\mu \otimes v_\lambda$ to $v_{\mu + \lambda}$ - so the map we want is the multiplication map scaled by $b_{K, \mu}$. The multiplication map also takes $u_\mu \otimes u_\lambda$ to $u_{\mu + \lambda}$. Therefore, the map we want takes $H(\lambda + \mu) u_\mu \otimes u_\lambda$ to $H(\lambda + \mu) u_{\lambda + \mu}$. So $b_{\mu, G^3}(\lambda) = \frac{H(\lambda + \mu)}{H(\lambda)} b_{\mu, K}$. 
\end{proof}
\begin{remark}
This lemma is easier to understand in terms of the lift of the cocycle, as explained in the remark in the previous section. If $A_K(\lambda)$ is the lift of $b_{\mu, K}$ and $A_{G^3}(\lambda)$ is the lift of $b_{\mu, G^3}$, then for $\lambda \in \Omega$, $A_{G^3}(\lambda) = H(\lambda) A_K(\lambda)$. 
\end{remark}
Similar reasoning applied to the map $V_\mu \rightarrow \mathcal{D}_\mu(X), v \rightarrow v \otimes P$ and the differential operator multiplication map also implies that $P_{\mu, G^3} P_{\nu, G^3} = P_{\mu + \nu, G^3}$. As such, all of the conclusions from the previous section apply to the b-function we are trying to find.

\section{The functions $H(\lambda)$}
We now wish to find $H(\lambda)$. The value of $H(\lambda)$ depends on the choice of the 6 subgroups from before; for certain values of $\lambda$, we can choose convenient subgroups to make the calculation easier. As such, we first need to show how the choice of subgroup changes $H(\lambda)$. 

In the last section, we defined $H(\lambda) = \langle u_{-\lambda}, u_\lambda \rangle$ where $u_\lambda$ and $u_{-\lambda}$ were normalized with $\langle u_{-\lambda}, v_\lambda \rangle = \langle v_{-\lambda}, u_\lambda\rangle = \langle v_{-\lambda}, v_\lambda\rangle = 1$; however, $H(\lambda)$ is easier to understand in an "un-normalized" form, $H(\lambda) = \frac{\langle u_{-\lambda}, u_\lambda\rangle \langle v_{-\lambda}, v_\lambda\rangle}{\langle u_{-\lambda}, v_\lambda \rangle \langle v_{-\lambda}, u_\lambda \rangle}$. In this form, we can instead choose arbitrary $G$-invariant $u_\lambda, u_{-\lambda}$, $B_1 \times B_2 \times P$-semiinvariant $v_\lambda$, and $B_1' \times B_2' \times P'$-semiinvariant $v_{-\lambda}$. We can then regard $H(\lambda)$ as a meromorphic function $H_\lambda(x \times x')$ on $X \times X'$. 

\begin{lemma}
The function $\frac{H_{\lambda + \mu}}{H_\lambda H_\mu}$ is constant on $X \times X'$.
\end{lemma}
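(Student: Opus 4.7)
The plan is to split $H_\lambda$ into four factors, three of which are sections on $X$, $X'$, or $X \times X'$ that are multiplicative in $\lambda$, and one of which is an honest constant in $(x, x')$. Each pairing in
\[
H_\lambda = \frac{\langle u_{-\lambda}, u_\lambda\rangle \langle v_{-\lambda}, v_\lambda\rangle}{\langle u_{-\lambda}, v_\lambda\rangle \langle v_{-\lambda}, u_\lambda\rangle}
\]
has a clean interpretation via Borel--Weil: $\langle v_{-\lambda}, v_\lambda\rangle = f_K^\lambda(x, x')$ depends on both $x$ (through $v_\lambda$) and $x'$ (through $v_{-\lambda}$); $\langle u_{-\lambda}, v_\lambda\rangle = f_{G^3}^\lambda(x)$ depends only on $x$; $\langle v_{-\lambda}, u_\lambda\rangle =: \tilde{f}_{G^3}^\lambda(x')$ is the analogous $G^3$-invariant section on $X'$ and depends only on $x'$; and $\langle u_{-\lambda}, u_\lambda\rangle =: D(\lambda)$ is a constant in $(x, x')$ because both vectors are $G$-invariant. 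Thus $H_\lambda(x, x') = D(\lambda) f_K^\lambda(x, x') / [f_{G^3}^\lambda(x) \tilde{f}_{G^3}^\lambda(x')]$.

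I would then fix the four normalizations $v_{\lambda+\mu}, v_{-\lambda-\mu}, u_{\lambda+\mu}, u_{-\lambda-\mu}$ so that the three multiplicativity identities $f_K^\lambda f_K^\mu = f_K^{\lambda+\mu}$, $f_{G^3}^\lambda f_{G^3}^\mu = f_{G^3}^{\lambda+\mu}$, and $\tilde{f}_{G^3}^\lambda \tilde{f}_{G^3}^\mu = \tilde{f}_{G^3}^{\lambda+\mu}$ all hold simultaneously. The first two are established in section 3 of the paper; the third follows by the identical argument applied on $X'$. These are three scalar constraints (each reducing, by uniqueness of semi-invariants up to scale, to equality of the two sides at a single test point) on four rescaling freedoms, so a simultaneous choice exists. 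Since the unnormalized $H_\lambda$ is invariant under rescaling any of its constituent vectors, the value of $H_{\lambda+\mu}/(H_\lambda H_\mu)$ is independent of which such choice we make.

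With those choices in hand, the three $(x, x')$-dependent factors of $H_{\lambda+\mu}$ split as $f_K^\lambda f_K^\mu$, $f_{G^3}^\lambda f_{G^3}^\mu$, and $\tilde{f}_{G^3}^\lambda \tilde{f}_{G^3}^\mu$, while the constant factor is $D(\lambda+\mu)$. Plugging into the defining formula and cancelling gives
\[
\frac{H_{\lambda+\mu}(x, x')}{H_\lambda(x, x') H_\mu(x, x')} = \frac{D(\lambda+\mu)}{D(\lambda) D(\mu)},
\]
which is visibly independent of $(x, x')$. The only nontrivial point is the bookkeeping of the simultaneous normalization; once that is set up, the rest is a mechanical cancellation.
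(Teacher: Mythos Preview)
Your proof is correct and follows essentially the same approach as the paper: both identify the four pairings in $H_\lambda$, observe that three of them correspond to multiplicative sections (the paper invokes this via ``the correspondence $v_\lambda \to f_K^\lambda,\ u_\lambda \to f_G^\lambda$'') while $\langle u_{-\lambda},u_\lambda\rangle$ is independent of $(x,x')$, and conclude that the ratio reduces to $\langle u_{-\lambda-\mu}, u_{\lambda+\mu}\rangle / (\langle u_{-\lambda}, u_\lambda\rangle \langle u_{-\mu}, u_\mu\rangle)$. The paper sets this up by transporting via the $G^6$-action on $X \times X'$ to a fixed base point, whereas you work directly with the sections and are more explicit about the normalization bookkeeping, but the substance is the same.
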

\begin{proof}
Any point $x_0 \times x'_0$ has some $g = (g_1, g_2, g_3, g_4, g_5, g_6)$ such that $x_0 \times x'_0 = (g_1, g_2, g_3, g_4, g_5, g_6) x \times x'$. Then as $(g_1, g_2, g_3) v_\lambda$ is a highest weight element with respect to the subgroup given by $x_0$ (and a similar statement for $v_{-\lambda}$, 

\begin{equation}H_\lambda(x_0 \times x'_0) = \frac{\langle u_{-\lambda}, u_\lambda \rangle \langle (g_4, g_5, g_6) v_{-\lambda}, (g_1, g_2, g_3) v_\lambda \rangle}{\langle u_{-\lambda}, (g_1, g_2, g_3) v_\lambda \rangle \langle (g_4, g_5, g_6) v_{-\lambda}, u_\lambda \rangle}
\end{equation}

But using the correspondence $v_\lambda \rightarrow f_K^\lambda, u_\lambda \rightarrow f_{G}^\lambda$, it's clear that each of the three products other than $\langle u_\lambda, u_{-\lambda} \rangle$ cancels; we therefore get that for any $x_0 \times x'_0$, we have
$$\frac{H_{\lambda +
    \mu}}{H_\lambda H_\mu} = \frac{\langle u_{-\mu - \lambda}, u_{\mu +
    \lambda} \rangle}{\langle u_{-\mu}, u_\mu \rangle \langle
  u_{-\lambda}, u_\lambda \rangle}
$$ Therefore, it is constant. 
\end{proof}

This lemma shows that when we change our choice of subgroups, it changes $\frac{H(\lambda + \mu)}{H(\lambda) H(\mu)}$ by a constant independent of $\lambda$ (and exponential in $\mu$). As we only care about $b_\mu(\lambda)$ up to a constant (depending on $\mu$, but not $\lambda$), this means that we can choose any subgroup to find the $H(\lambda)$. 

It will turn out that we only need to know $H$ on certain subcones of $\Omega$. Ginzburg and Finkelberg found the generators of $\Omega$ in the proof of lemma 5.5.1 \cite{GINZBURGFINKELBERG}; there are two families of generators. Label them as follows:

\[
\alpha_i = (\wedge^i \omega, \wedge^{n - i} \omega, 0), 1 \leq i \leq {n - 1}
\]

\[
\beta_j = (\wedge^{j - 1} \omega, \wedge^{n - j} \omega, 1), 1 \leq j \leq n
\]

We then only need find $H$ on the following subcones:

\[
\Delta = span(\{\alpha_i\}_{i = 1}^n)
\]

\[
\Delta_{< j} = span({\beta_j} \cup \{\alpha_i\}_{i < j})
\]

\[
\Delta_{\geq j} = span({\beta_j} \cup \{\alpha_i\}_{i \geq j})
\]

For each $j$, choose $B_1, B_2'$ as the upper triangular subgroup, $B_1', B_2$ as the lower triangular subgroup, $P$ fixing $e_j$, and $P'$ fixing $e'_j$. Let $R_+$ be the set of positive roots of $G = SL_n$. For each $j$, define the following function on positive roots:

For $\gamma \in R_+$, define 

\begin{equation}
\chi_j(\gamma) = \begin{cases}
       1 & \text{if} \enspace \gamma > \wedge^j \omega \\
       1 & \text{if} \enspace \gamma > \wedge^{j - 1} \omega\\
       0 & \quad \text{else}
\end{cases}
\end{equation}

\begin{theorem}
If $\lambda \in \Delta$, then $H(\lambda) = dim(V_{\lambda_1})$. 
If $\lambda \in \Delta_{i \geq j}$ or $\lambda \in \Delta_{i < j}$ with $\lambda = (\sum_i r_i \alpha_i) + s_j \beta_j$, then 
$$H(\lambda) =  \frac{\prod_{\gamma \in R_+} (h_\gamma(\rho + \sum r_i \alpha_i) + s_j \chi_j(\gamma))}{\prod_{\gamma \in R_+} h_\gamma(\rho)}$$
\end{theorem}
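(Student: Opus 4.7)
The plan is to compute $H(\lambda)$ directly for the listed subcones, using the freedom established in the previous lemma to pin down the six subgroups as in the paragraph preceding the theorem. With $B_1,B_2'$ upper triangular, $B_1',B_2$ lower triangular, $P$ fixing $e_j$, and $P'$ fixing $e_j^{*}$, the points $x$ and $x'$ are specific enough that we can write down explicit models for the four vectors $v_\lambda$, $v_{-\lambda}$, $u_\lambda$, $u_{-\lambda}$ in each factor of $V_\lambda$ and compute the pairings in $H(\lambda) = \langle v_{-\lambda}, v_\lambda\rangle \langle u_{-\lambda}, u_\lambda\rangle / (\langle v_{-\lambda}, u_\lambda\rangle \langle u_{-\lambda}, v_\lambda\rangle)$.

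First I would dispatch the case $\lambda \in \Delta$, where $s_j = 0$ so the $\mathbb{P}^{n-1}$ factor is trivial and $V_\lambda = V_{\lambda_1}\otimes V_{\lambda_2}$. Since $\wedge^{n-i}\omega$ is the dual of $\wedge^i\omega$, the representation $V_{\lambda_1}\otimes V_{\lambda_2}$ is canonically identified with $\operatorname{End}(V_{\lambda_1})$. Under this identification the diagonal $G$-invariant $u_\lambda$ corresponds to $\operatorname{id}_{V_{\lambda_1}}$ while $v_\lambda$ corresponds to the rank-one endomorphism sending a highest weight vector to itself. The opposite-Borel choices make the three normalizing pairings in the denominator equal to $1$, and $\langle u_{-\lambda}, u_\lambda\rangle$ becomes the trace of the identity, giving $H(\lambda) = \dim V_{\lambda_1}$. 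One then rewrites $\dim V_{\lambda_1} = \prod_{\gamma \in R_+} h_\gamma(\rho + \sum r_i \alpha_i) / \prod_{\gamma \in R_+} h_\gamma(\rho)$ by the Weyl dimension formula, which matches the second formula specialised to $s_j=0$.

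For the mixed subcones $\Delta_{\geq j}$ and $\Delta_{<j}$, the task is to build the $G$-invariant $u_\lambda \in V_{\lambda_1}\otimes V_{\lambda_2} \otimes Sym^{s_j}\mathbb{C}^n$ explicitly, inserting $e_j^{s_j}$ into a compatible combination of weight vectors in $V_{\lambda_1}\otimes V_{\lambda_2}$ whose weights are forced by $G$-invariance; similarly for $u_{-\lambda}$ with $(e_j^{*})^{s_j}$. The pairing $\langle u_{-\lambda}, u_\lambda\rangle$ then decomposes as a trace twisted by the mirabolic data, which I would reorganize as a product over the positive roots $\gamma$, with each factor reading $h_\gamma(\rho + \sum r_i \alpha_i) + s_j\chi_j(\gamma)$. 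The function $\chi_j$ records exactly which roots $\gamma$ detect the distinguished line $\mathbb{C}e_j$ under the mirabolic pairing; the split between $\Delta_{\geq j}$ and $\Delta_{<j}$ simply reflects which cone of $\alpha_i$'s can be paired with $\beta_j$ while keeping the invariant well-defined and non-zero on the chosen base point.

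The main obstacle is the last step: finding the explicit form of $u_\lambda$ in the presence of the $Sym^{s_j}\mathbb{C}^n$ factor and verifying that after pairing with $u_{-\lambda}$ the answer really does factor over $R_+$ with precisely the combinatorial weights $\chi_j(\gamma)$. The pairings involving $v_\lambda$, $v_{-\lambda}$ are forced to be $1$ by the normalizations and the opposite-Borel setup, so all the content lies in disentangling the mirabolic contribution. The $\alpha_i$-contribution is governed by Weyl's formula, so the heart of the argument is showing that adding a single $\beta_j$ shifts each Weyl-dimension factor additively by $s_j\chi_j(\gamma)$, which I would verify by a direct root-by-root bookkeeping using the explicit form of $P$ and $P'$ and the known action of the root subgroups on $e_j$ and $e_j^*$.
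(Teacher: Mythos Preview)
Your treatment of the $\Delta$ case matches the paper exactly: identify $V_\lambda$ with $\operatorname{End}(V_{\lambda_1})$, take $u_\lambda$ to be the identity and $u_{-\lambda}$ the trace, and read off $\dim V_{\lambda_1}$ via Weyl's formula.

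For the mixed subcones your plan diverges from the paper and leaves the central difficulty open. The paper does \emph{not} build $u_\lambda$ explicitly or compute $\langle u_{-\lambda},u_\lambda\rangle$ head-on. Instead it reduces to two instances of the $\Delta$ case. Setting $\lambda' = s_j\alpha_j + \sum_i r_i\alpha_i \in \Delta$ (replace $\beta_j$ by $\alpha_j$), the paper constructs a $G$-equivariant map $g:V_\lambda\to V_{\lambda'}$ by iterating the wedge $\wedge_j:V_{\wedge^{j-1}\omega}\otimes\mathbb{C}^n\to V_{\wedge^j\omega}$ to absorb the $Sym^{s_j}\mathbb{C}^n$ factor. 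Because $g$ is $G$-equivariant and sends $v_\lambda\mapsto v_{\lambda'}$, one has $g(u_\lambda)=c\,u_{\lambda'}$ with $c=H(\lambda)/H(\lambda')$. The constant $c$ is then extracted by examining $g^*(v_{-\lambda'})$: the transpose of the wedge, restricted to the span of $e_1,\dots,e_j$, is itself an $SL_j$-invariant, and one recognizes $c=H_{SL_j}(\delta)$ for $\delta=(s_j\wedge^{j-1}\omega_j,\,s_j\omega_j,\,0)$, which sits in the $\Delta$-cone \emph{of the smaller group} $SL_j$. Hence $H(\lambda)=H_{SL_j}(\delta)\cdot H(\lambda')$, and both factors are Weyl dimensions already computed in Part~1. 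The product over $R_+$ with the shifts $s_j\chi_j(\gamma)$ then falls out by combining the root systems of $SL_n$ and $SL_j\subset SL_n$.

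Your alternative---insert $e_j^{s_j}$ by hand, pair, and hope the result ``decomposes as a trace twisted by mirabolic data'' that one can ``reorganize as a product over positive roots''---is exactly the step you yourself flag as the main obstacle, and you supply no mechanism for it. For general $\lambda_1,\lambda_2$ the diagonal $G$-invariant in $V_{\lambda_1}\otimes V_{\lambda_2}\otimes Sym^{s_j}\mathbb{C}^n$ has no transparent closed form, and there is no a priori reason its self-pairing should factor over $R_+$; the paper's wedge-map reduction is precisely what manufactures that factorized structure. Without that idea (or an equivalent reduction), your Part~2 remains a statement of intent rather than a proof.
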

\begin{proof}

If $\lambda \in \Delta$, then $\lambda = (\lambda_1, \lambda_1^*, 0)$, where $V_{\lambda_1^*} = V_{\lambda_1}^*$. Then by choosing $u_\lambda \in V_\lambda$ corresponding to the identity map on $V_{\lambda_1}$, $u_{-\lambda \in V_{\lambda}^*}$ corresponding to the trace map, $v_\lambda = v_{\lambda_1} \otimes v_{-\lambda_1}$ and $v_{-\lambda} = v_{-\lambda_1} \otimes v_{\lambda_1}$, we can see that:

\begin{equation} \label{HonDelta}
H(\lambda) = dim(V_{\lambda_1}) = \prod_{\gamma \in R_+} \frac{h_\gamma(\lambda_1 + \rho)}{h_\gamma(\rho)}
\end{equation}

If $\lambda \in \Delta_{\geq j}$, then let 
\begin{align*}
\lambda &= (\lambda_1, \lambda_2, l) = s_j \beta_j + \sum_{i \geq j} r_i \alpha_i\\
\lambda' &= (\lambda_1', \lambda_2', l') = s_j \alpha_j + \sum_{i \geq j} r_i \alpha_i\\
\lambda^0 &= (\lambda_1^0, \lambda_2^0, l^0) = \sum_{i \geq j} r_i \alpha_i
\end{align*}

We now define several maps to be used later. Let us denote the wedge map $\wedge_j: V_{\wedge^{j - 1} \omega} \otimes \mathbb{C}^n \rightarrow V_{\wedge^j \omega}$; note that it is $G$-equivariant. By copying this map $s_j$ times, we get a map $\wedge_j^{s_j}: V_{s_j \wedge^{j - 1} \omega} \otimes Sym^{s_j} \mathbb{C}^n \rightarrow V_{s_j \wedge^j \omega}$ taking $v_{s_j \wedge^{j - 1} \omega} \otimes v_{s_j \omega} \rightarrow v_{s_j \wedge^j \omega}$. Then by tensoring with $Id_{V_{\lambda^0}}$, we have a map $f: (V_{s_j \wedge^{j - 1} \omega} \otimes V_{\lambda_1^0}) \otimes V_{\lambda_2^0} \otimes V_{s_j \omega} \rightarrow (V_{s_j \wedge^j \omega} \otimes V_{\lambda_1^0}) \otimes V_{\lambda_2^0}$. Then let $g: V_\lambda \rightarrow V_{\lambda'}$ be defined by the composition:

\begin{equation}
V_\lambda \xrightarrow{i_{s_j \wedge^{j - 1} \omega, \lambda_1^0}} (V_{s_j \wedge^{j - 1} \omega} \otimes V_{\lambda_1^0}) \otimes V_{\lambda_2^0} \otimes V_{s_j \omega} \xrightarrow{f} (V_{s_j \wedge^j \omega} \otimes V_{\lambda_1^0}) \otimes V_{\lambda_2^0} \xrightarrow{p_{s_j \wedge^j \omega, \lambda_1^0}} V_\lambda'
\end{equation}
where $i$ is the natural inclusion map that takes $v_\lambda$ to $v_{s_j \wedge^{j - 1} \omega} \otimes v_{\lambda_1^0}$ and $p$ is the natural quotient map that takes $v_{s_j \wedge^j \omega} \otimes v_{\lambda_1^0}$ to $v_\lambda'$. Then $g$ is $G$-equivariant and $g(v_\lambda) = v_\lambda'$. 

Because $g$ is $G$-equivariant, we have that $g(u_\lambda) = g(H(\lambda) v_\lambda^G) = H(\lambda) g(v_\lambda)^G = H(\lambda) v_{\lambda'}^G = \frac{H(\lambda)}{H(\lambda')} u_{\lambda'}$. Therefore, if we define $c$ by $g(u_\lambda) = c u_{\lambda'}$, then $H(\lambda) = c H(\lambda')$; as $\lambda' \in \Delta$, we know $H(\lambda')$, so we only need to find $c$.

Let $g^*: V_{\lambda'}^* \rightarrow V_\lambda^*$ be the transpose of $g$. By the definition of $g$, we have 
$$g^*(v_{-\lambda'}) = g^*(v_{-\lambda_1'} \otimes v_{-\lambda_2}) = (p \circ f)^*((v_{-\lambda_1^0} \otimes v_{- s_j \wedge^j \omega}) \otimes v_{-\lambda_2})$$

Using the definition of $f$, we want to know $(\wedge_j^{s_j})^* v_{- s_j \wedge^j \omega}$, where $(\wedge_j^{s_j})^*$ is the transpose of $\wedge_j^{s_j}$. Let $SL_j \subset G$ act on the first $j$ coordinates in both $\mathbb{C}^n$ and its dual. By restricting our focus to the subspaces where $SL_j$ acts nontrivially (and doing the same for the dual spaces), we obtain a $SL_j$-equivariant map $\wedge_j^{s_j}: V_{s_j \wedge^{j - 1} \omega_j} \otimes V_{s_j \omega_j} \rightarrow V_{s_j \wedge^j \omega_j} = \mathbb{C}$ taking $v_{s_j \wedge^{j - 1} \omega_j} \otimes v_{s_j \omega_j} \rightarrow 1$; in other words, a $SL_j$-invariant element $w \in V_{s_j \wedge^{j - 1} \omega_j}^* \otimes V_{s_j \omega_j}^*$ such that $\langle w, v_{s_j \wedge^{j - 1} \omega_j} \otimes v_{s_j \omega_j} \rangle = 1$.

But this corresponds to the conditions we've already studied if we take $G' =\nobreak SL_j$, ${\delta = (s_j \wedge^{j - 1} \omega_j, s_j \omega_j, 0) \in \Delta_{G'}}$; by definition, $w = u_{- \delta}$. Then by the uniqueness up to scalar of the $G'$-invariant, $w = c_1 v_{- \delta}^{G'}$ for some $c_1$. By pairing with $v_\delta$, we get that $1 = \langle w, v_\delta\rangle = \langle c_1 v_{-\delta}^{G'}, v_\delta\rangle = c_1 \langle v_{-\delta}, v_\delta^{G'}\rangle$. Then as $u_\delta = H_{SL_j}(\lambda) v_\delta^{G'}$, we get that $H_{SL_j}(\delta) = c_1 \langle v_{-\delta}, u_\delta\rangle = c_1$. 

We therefore have that $w = H_{SL_j}(\delta) v_{-\delta}^{G'}$. By expanding our focus again to all $n$ coordinates, we have that $\wedge_j^{s_j*} v_{- s_j \wedge^j \omega} = H_{SL_j}(\delta) (v_{- s_j \wedge^{j - 1} \omega} \otimes v_{- s_j \omega})^{SL_j}$. Therefore, as $v_{-\lambda_1^0}$ and $v_{-\lambda_2}$ are $SL_j$-invariant, $g^*(v_{-\lambda'}) = H_{SL_j}(\delta) v_{-\lambda}^{SL_j}$. Then by pairing with $u_\lambda$, we get that:

\begin{align*}
c &= \langle v_{-\lambda'}, c u_{\lambda'}\rangle = \langle v_{-\lambda'}, g(u_\lambda)\rangle = \langle g^*(v_{-\lambda'}), u_\lambda\rangle \\
&= \langle H_{SL_j}(\delta) v_{-\lambda}^{SL_j}, u_\lambda\rangle = H_{SL_j}(\delta) \langle v_{-\lambda}, u_\lambda \rangle = H_{SL_j}(\delta)
\end{align*}

where we use the $G$-invariance of $u$ to say that $\langle v_{-\lambda}^{SL_j}, u_\lambda \rangle = \langle v_{-\lambda}, u_\lambda \rangle$.

We therefore have that $H(\lambda) = H_{SL_j}(\delta) H(\lambda')$. By the calculation of $H(\lambda)$ on $\Delta$ (equation \ref{HonDelta}), we get that:

\begin{align*}
H(\lambda') &= \frac{\prod_{\gamma \in R_+} (h_\gamma(\rho + (\sum_i r_i \wedge^i \omega) + s_j \wedge^j \omega))}{\prod_{\gamma \in R_+}(h_\gamma(\rho))} \\
H_{SL_j}(\delta) &= \frac{\prod_{\gamma \in R_{SL_j +}} (h_\gamma(\rho_{SL_j} + s_j \wedge^{j - 1} \omega_j))}{\prod_{\gamma \in R_{SL_j + }} h_\gamma(\rho_{SL_j})}
\end{align*}

By considering $R_{SL_j +}$ as a subset of $R_+$, we get the second half of the theorem.
\end{proof}

For $\gamma \in R_+$, define 

\begin{equation}
\chi_j'(\gamma) = \begin{cases}
       1 & \text{if} \enspace \gamma > \wedge^j \omega + \wedge^{j - 1} \omega\\
       0 & \quad \text{else}
     \end{cases}
\end{equation}

Combining this calculation of $H(\lambda)$ and $b_{K, \mu}(\lambda)$ from earlier, we get:

\begin{corollary}
We can find $b_{G^3, \mu}(\lambda)$ if there is a subcone listed above that both $\mu, \lambda$ are in. Specifically:

If $\mu = \alpha_j, \lambda \in \Delta$ (so $\lambda = (\lambda_1, \lambda_1^*, 0)$), then

$$
b_\mu(\lambda) = \frac{\prod_{\gamma > \wedge^j \omega} (h_\gamma(\rho + \lambda_1) + 1) (h_\gamma(\rho + \lambda))}{\prod_{\gamma > \wedge^j \omega} (h_\gamma(\rho) + 1) (h_\gamma(\rho))}
$$

If $\mu = \beta_j, \lambda \in \Delta_{i < j}$ or $\lambda \in \Delta_{i \geq j}$ with $\lambda = (\sum_i r_i \alpha_i) + s_j \beta_j$, then

$$b_\mu(\lambda) = \prod_{\substack{\gamma > \wedge^{j - 1} \omega\\
\operatorname{and}\  \gamma > \wedge^j \omega}} \left(h_\gamma(\rho + \sum r_i
\alpha_i) + s_j \right) \prod_{\substack{\gamma > \wedge^{j - 1}
  \omega\\
\operatorname{or}\ \gamma > \wedge^j \omega}}\left(h_\gamma(\rho + \sum r_i \alpha_i) + s_j + 1\right)
$$

If $\mu = \alpha_k, k < j, \lambda \in \Delta_{i < j}$ or $k \geq j, \lambda \in \Delta_{i \geq j}$, then 

$$
b_\mu(\lambda) = \prod_{\gamma > \wedge^k \omega} \left(h_\gamma(\rho + \sum r_i \alpha_i) + s_j \chi_j'(\gamma) \right) \left( h_\gamma(\rho + \sum r_i \alpha_i) + s_j \chi_j(\gamma) + 1 \right)
$$
\end{corollary}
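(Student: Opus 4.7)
The proof plan is to take the lemma $b_{G^3,\mu}(\lambda) = \tfrac{H(\lambda+\mu)}{H(\lambda)}\, b_{K,\mu}(\lambda)$ from Section 5 as the starting point and, in each of the three cases, substitute the explicit formulas already established: the expression for $b_{K,\mu}(\lambda)$ derived earlier from Kashiwara's theorem, and the values of $H$ on $\Delta$, $\Delta_{\geq j}$, and $\Delta_{< j}$ given by the previous theorem. The work is essentially bookkeeping: one has to verify that, once the products are combined, everything packages into the single product expressions stated, with the indicator functions $\chi_j$ and $\chi_j'$ recording which positive roots actually contribute.

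The first step is to rewrite $b_{K,\mu}(\lambda)$ in a way that matches the shape of the target formulas. Since each generator $\alpha_k$ has weights $(\wedge^k\omega,\wedge^{n-k}\omega,0)$ and each $\beta_j$ has weights $(\wedge^{j-1}\omega,\wedge^{n-j}\omega,1)$, and since $h_\gamma$ of any fundamental weight is $0$ or $1$, the inner products $\prod_{i=1}^{h_\alpha(\mu_s)}$ in Kashiwara's formula each collapse to a single factor or to $1$. After reindexing (using $h_\gamma(\wedge^{n-j}\omega)$ on the second factor via the $-w_0$ symmetry, which the notation ``$\gamma>\wedge^{j-1}\omega$'' vs ``$\gamma>\wedge^j\omega$'' is designed to absorb), $b_{K,\mu}(\lambda)$ becomes a product over those $\gamma\in R_+$ detected by $\chi_j$ or $\chi_j'$ together with the factor from $b_{\mathbb{P},m}$ in the $\beta_j$ case.

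The second step is the ratio $H(\lambda+\mu)/H(\lambda)$. For case (1), $\lambda\in\Delta$ and $\mu=\alpha_j$ keeps us in $\Delta$, so both $H$'s are dimensions of irreducibles, and the Weyl dimension formula gives a telescoping product that contributes exactly the factors $h_\gamma(\rho+\lambda_1)+1$ ranging over $\gamma>\wedge^j\omega$. For cases (2) and (3), $\mu$ is one of the generators of the same subcone $\Delta_{<j}$ or $\Delta_{\geq j}$ as $\lambda$, so the theorem gives both $H(\lambda)$ and $H(\lambda+\mu)$ as a single product, and the ratio is again a product of linear factors indexed by roots $\gamma$ with $\chi_j(\gamma)=1$ (or, in the $\beta_j$ case, with $\chi_j'(\gamma)=1$ when the change occurs in the ``$s_j$'' coefficient).

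The final step is to multiply the two pieces and match the result with the stated formulas, identifying which roots $\gamma$ receive a contribution from $\chi_j$, from $\chi_j'$, or from both, and checking that the $+1$ shifts coming from Kashiwara's formula combine correctly with the shifts coming from the $H$-ratio. The main obstacle is precisely this matching: one must verify carefully that ``$\gamma>\wedge^{j-1}\omega$ or $\gamma>\wedge^j\omega$'' versus ``$\gamma>\wedge^{j-1}\omega$ and $\gamma>\wedge^j\omega$'' (i.e.\ $\chi_j$ vs $\chi_j'$) appears in the right places, and that the use of $\lambda_2 = \lambda_1^*$ (in case (1)) and the $-w_0$ identification $h_\gamma(\lambda_2)=h_{-w_0\gamma}(\lambda_1)$ lets the seemingly asymmetric $\wedge^j\omega$/$\wedge^{n-j}\omega$ terms in $b_{K,\mu}$ collapse into a single product indexed by roots above $\wedge^j\omega$.
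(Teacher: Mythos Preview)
Your proposal is correct and matches the paper's approach exactly: the paper states this corollary with no proof beyond the one-line remark ``Combining this calculation of $H(\lambda)$ and $b_{K,\mu}(\lambda)$ from earlier, we get:'', and your plan is precisely to carry out that combination via $b_{G^3,\mu}(\lambda)=\tfrac{H(\lambda+\mu)}{H(\lambda)}b_{K,\mu}(\lambda)$. Your outline in fact supplies more detail (the $-w_0$ reindexing, the collapse of the inner products when $\mu$ is a generator, the role of $\chi_j$ versus $\chi_j'$) than the paper bothers to record.
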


\section{The b-function}
We have now assembled enough rules to fully determine $b_{G^3, \mu}(\lambda)$ for any $\mu \in \Omega, \lambda \in \mathbb{C}\Omega$. By equation \ref{Cocycle Equation}, it is enough to find $b_{G^3, \mu}$ for $\mu$ generators of $\Omega$. Write $\lambda = \sum a_i \alpha_i + \sum b_j \beta_j$. We therefore want to find a set of polynomials $\{b_{\alpha_i}\}_{1 \leq i \leq n - 1}, \{b_{\beta_j}\}_{0 \leq j \leq n - 1}$ satisfying the following properties:
\begin{enumerate}
\item Each of the polynomials factors as a product of hyperplanes with integer coefficients, as in lemma \ref{Hyperplane decomposition}. 

\item If $a_i = -1$, $f^{\lambda + \alpha_i}$ is a $G$-invariant global section of a line bundle which has no nontrivial $G$-invariant global sections, so the functional equation implies that $b_{\alpha_i}(\lambda) = 0$, and therefore that $(a_i + 1)|b_{\alpha_i}$; this is analogous to the fact that $s + 1|b(s)$. Similarly, $(b_j + 1)|b_{\beta_j}$.

\item If $(\delta + k)|b_{\alpha_i}$ for $\delta \in Hom(\Lambda, \mathbb{Z})$, then $\langle c\delta, \alpha_i\rangle \neq 0$, and similarly for $b_{\beta_j}$, as in corollary \ref{Coefficient Zero}.

\item If $(\delta + k)|b_{\alpha_i}$ for $\delta \in Hom(\Lambda, \mathbb{Z})$, and $\langle \delta, \alpha_{i'} \rangle \neq 0$, then for some $k'$, $(\delta + k')|b_{\alpha_i'}$, and similarly for $b_{\beta_j}$, as in corollary \ref{Source Corollary}.

\item If $(\delta + k)|b_{\alpha_i}$ for $\delta \in Hom(\Lambda, \mathbb{Z})$ and $\langle \delta, \alpha_i\rangle = \langle \delta, \alpha_j\rangle$, then $(\delta + k)|b_{\alpha_j}$, and similar results with $b_{\beta_j}$, as in corollary \ref{Coefficients Equal}. 

\item If $b_{j} = 0$ for all $j$, we have
\[b_{\alpha_i} = \frac{\prod_{\gamma > \wedge^j \omega} (h_\gamma(\rho +
  \lambda_1) + 1) (h_\gamma(\rho + \lambda))}{\prod_{\gamma > \wedge^j
    \omega} (h_\gamma(\rho) + 1) (h_\gamma(\rho))}.\]
 If $s_j = 0$ for all $j \neq k$ and either $a_i = 0$ for either all
 $i < k$ or all $i \geq k$, we have 
\begin{align*}b_{\alpha_i} &= \prod_{\gamma > \wedge^k \omega}
  \left(h_\gamma(\rho + \sum r_i \alpha_i) + s_j \chi_j'(\gamma) \right)
  \left( h_\gamma(\rho + \sum r_i \alpha_i) + s_j \chi_j(\gamma) + 1
  \right)\\
b_{\beta_k} &= \prod_{\substack{\gamma > \wedge^{j - 1} \omega\\
\operatorname{and}\ \gamma >
  \wedge^j \omega}}\ \left(h_\gamma(\rho + \sum r_i \alpha_i) + s_j
\right) \prod_{\substack{\gamma > \wedge^{j - 1} \omega\\
\operatorname{or}\ \gamma > \wedge^j
  \omega}}\left(h_\gamma(\rho + \sum r_i \alpha_i) + s_j + 1\right).
\end{align*} 
\end{enumerate}

These will determine the b-functions entirely. Let $\Gamma$ be the gamma function. 

\begin{theorem}
Write $\lambda = (\sum_i a_i \alpha_i) + (\sum_j b_j \beta_j)$. Then let 

\begin{multline*}
A(\lambda) = \prod_j \Gamma (b_j + 1 ) \cdot
\prod_{\gamma \in R_+} \Gamma \left(h_\gamma(\rho) + \sum_{\omega^i <
    \gamma} a_i + \sum_{\substack{\omega^{j - 1}<\gamma \\
\operatorname{and}\ 
\omega^j < \gamma}}
b_j \right)\\
\cdot\Gamma\left(h_\gamma(\rho) + \sum_{\omega^j < \gamma} a_i +
\sum_{\substack{\omega^{j - 1}<\gamma \\
\operatorname{or}\ 
\omega^j < \gamma}}
 b_j + 1\right) 
\end{multline*}

Then $b_\mu(\lambda) = \frac{A(\lambda + \mu)}{A(\lambda)A(\mu)}$. 
\end{theorem}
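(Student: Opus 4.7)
The plan is to verify the proposed formula by invoking the characterization set up in the preamble to the theorem: the six listed properties are asserted to determine the polynomials $b_{\alpha_i}(\lambda)$ and $b_{\beta_j}(\lambda)$ uniquely, so since the overall b-function is assembled from these via the cocycle equation \ref{Cocycle Equation}, it suffices to check that the candidate $b_\mu(\lambda) = A(\lambda+\mu)/(A(\lambda)A(\mu))$ satisfies the six conditions when $\mu$ runs over the generators $\alpha_i$ and $\beta_j$ of $\Omega$.

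The core computation is the telescoping identity $\Gamma(x+1)/\Gamma(x) = x$ applied factor by factor to $A(\lambda+\mu)/A(\lambda)$. When $\mu = \alpha_i$, incrementing $a_i$ by $1$ shifts the Gamma argument in both the "and" and "or" products by $1$ precisely when $\wedge^i \omega < \gamma$, producing two linear factors per such $\gamma$. When $\mu = \beta_j$, the prefactor $\Gamma(b_j + 1)$ contributes one linear factor in $b_j$, the "and" Gamma is shifted exactly for those $\gamma$ with $\wedge^{j-1}\omega < \gamma$ and $\wedge^j\omega < \gamma$, and the "or" Gamma is shifted on the "or" set. Matching these telescoped products against the explicit formulas in the final corollary of Section 6 verifies property 6 on the subcones $\Delta$, $\Delta_{\geq j}$, and $\Delta_{<j}$ on which those formulas are proved.

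Properties 1, 3, 4, and 5 are then essentially formal: property 1 (hyperplane factorization) is visible directly in the telescoped form; the symmetric indexing over $\gamma \in R_+$ gives property 5 (Corollary \ref{Coefficients Equal}) for free, because two generators with the same pairing against a covector $\delta \in \mathrm{Hom}(\Lambda,\mathbb{Z})$ produce the same set of shifted factors; properties 3 and 4 follow from writing the b-function as a ratio $A(\lambda+\mu)/A(\lambda)$ up to the $\mu$-dependent normalization, which automatically enforces the source constraint of Corollary \ref{Source Corollary} and the trivially-zero constraint of Corollary \ref{Coefficient Zero}. Property 2 reduces to locating the Gamma factor whose argument hits a nonpositive integer when $a_i = -1$ or $b_j = -1$; by construction of $A$, that factor is exactly the linear term $(a_i+1)$ or $(b_j+1)$ demanded.

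The main obstacle is the combinatorial bookkeeping in the second paragraph: aligning the "and" / "or" shift sets coming out of the $\Gamma$-telescope with the indicator functions $\chi_j$ and $\chi_j'$ of Section 6 requires a case analysis over whether $\wedge^{j-1}\omega < \gamma$ and $\wedge^j\omega < \gamma$ individually hold for each root $\gamma \in R_+$. A secondary subtlety is to actually justify the assertion that the six properties determine the polynomials uniquely; I expect this to follow by induction on the coordinates $(a_i, b_j)$, using property 6 as the base case (it explicitly pins down $b_{\alpha_i}$ and $b_{\beta_j}$ on the subcones where one family of coordinates vanishes), properties 2 and 4 to locate the zero loci, and property 5 to propagate matched linear factors across generators, with the cocycle equation tying everything together.
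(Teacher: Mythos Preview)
Your approach inverts the paper's: you propose to \emph{verify} that the displayed $A(\lambda)$ produces a family satisfying the six properties and then appeal to uniqueness, whereas the paper \emph{derives} the formula directly from the properties. The paper never writes down a candidate and checks it; instead it starts from property 6 on $\Delta$ to name the linear factors $L_\gamma, L'_\gamma$ of $b_{\alpha_i}$ restricted to $\Delta$, uses property 5 to show these factors are independent of $i$, and then pins down the $b_j$-coefficient of each $L_\gamma, L'_\gamma$ by a counting argument on the sets $S_{i,j}, S'_{i,j}$ using property 6 on $\Delta_{<j}$ and $\Delta_{\geq j}$ together with the inequality $h_\gamma(\rho) \geq i - j + 1$ forced on any root contributing a nonzero coefficient. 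A final degree count plus property 2 isolates the extra factor $(b_j+1)$ in $b_{\beta_j}$. This derivation \emph{is} the uniqueness proof.

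That is where your proposal has a genuine gap: what you call the ``secondary subtlety'' (uniqueness) is in fact the entire content of the argument, and your one-sentence sketch (``induction on the coordinates $(a_i,b_j)$ \dots\ with the cocycle equation tying everything together'') does not come close to the combinatorics actually needed. Property 6 only determines $b_{\alpha_i}$ and $b_{\beta_j}$ on proper subcones of $\mathbb{C}\Omega$; extending each linear factor off those subcones requires knowing, for every root $\gamma$ and every index $j$, whether the $b_j$-coefficient in $L_\gamma$ and in $L'_\gamma$ is $0$ or $1$, and the paper's saturation argument ($|S_{i,j}| \le i(n-j)$, $|S'_{i,j}| \le i(n-j-1)$, with equality forced by the known sum) is what nails this down. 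Your verification of the telescope against property 6 is correct and routine, but it buys nothing toward uniqueness; if you carry out the uniqueness argument properly you will have reproduced the paper's proof, making the verification step redundant.
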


\begin{proof}

Let $\alpha_i = (\wedge^i \omega, \wedge^{n - i} \omega, 0) \in \Delta$. By property 1, $b_{\alpha_i}(\lambda) = \prod_k \langle \eta_k, \lambda\rangle + m_k$ for some $\eta_k, m_k$ such that $\langle \eta_k, \alpha_i\rangle \neq 0$. By property 6, we have that $b_{\alpha_i}(\lambda) = \frac{\prod_{\gamma \in R_+} (h_\gamma(\rho + \lambda_1) + 1) h_\gamma(\rho + \lambda)}{\prod_{\gamma \in R_+} (h_\gamma(\rho) + 1) h_\gamma(\rho)}$ if $\lambda \in \mathbb{C}\Delta$. By combining these, we get that we can partition the factors of $b_{\alpha_i}$ into two families of hyperplanes. The families are both indexed by the roots $\gamma \in R_+$ such that $\gamma > \wedge^i \omega$; define $L_{\gamma, \alpha_i}$ as the hyperplane that, on $\Delta$, is equal to $h_\gamma(\lambda_1) + h_\gamma(\rho) + 1$, and $L'_{\gamma, \alpha_i}$ as the hyperplane that, on $\Delta$, is equal to $h_\gamma(\lambda_1) + h_\gamma(\rho)$. 

By property 5, if $\gamma > \wedge^i \omega, \wedge^{i'} \omega$, then $L_{\gamma, \alpha_i}|b_{\alpha_{i'}}$; the only hyperplane it could match is $L_{\gamma, \alpha_{i'}}$, so $L_{\gamma, \alpha_i} = L_{\gamma, \alpha_{i'}}$. Similarly, $L_{\gamma, \alpha_i}' = L_{\gamma, \alpha_{i'}}'$. Thus, we can ignore the subscripts of $\alpha$ and just denote the hyperplanes as $L_\gamma, L_\gamma'$. 

Choose some $j$. Then for each $\gamma$, there is some $i$ such that $\gamma > \wedge^i \omega$. If $i < j$, then $\alpha_i$ and $\beta_j$ are both in $\Delta_{< j}$, while if $i \geq j$, then they are both in $\Delta_{\geq j}$. In either case, by property 6, the coefficient of $b_j$ in $L_\gamma$ and $L_{\gamma}'$ must either be $0$ or equal to the coefficient of $a_i$. As the coefficients of all of the $a_i$ are either 0 or 1, this implies that the coefficients of all of the $b_j$ are either 0 or 1. By properties 3, 4, and 5, this implies that any hyperplane factor of $b_{\beta_j}$ is either $L_\gamma$ or $L'_\gamma$ for some $\gamma$, or that its coefficients of $a_i$ are all 0.

Let $S_j$, resp. $S_j'$ be the set of $\gamma$ such that the coefficient of $b_j$ in $L_\gamma$, resp. $L_\gamma'$ is 1. Let $S_{i, j}$, resp. $S_{i, j}'$ be the subset of $S_j$, resp. $S_j'$ such that $\gamma > \wedge^i \omega$. As for each $\gamma$ there is some $i$ such that $\gamma > \wedge^i \omega$, $S_j = \cup S_{i, j}, S'_j = \cup S'_{i, j}$. 

Assume without loss of generality that $i < j$, and therefore that $\alpha_i, \beta_j$ are in $\Delta_{i < j}$. Then by property 6, $|S_{i, j}| + |S'_{i, j}| = i(n - j) + i(n - j - 1)$. Also by property 6, if $\gamma \in S_{i, j}$, then $h_\gamma(\rho) + 1 \geq i - j + 1$, while if $\gamma \in S'_{i, j}$, then $h_\gamma(\rho) \geq i - j + 1$. Finally, also by property 6, if $\gamma \in S_{i, j}$ or $S'_{i, j}$, then $\gamma > \wedge^{j - 1} \omega$. Then $\gamma \in S_{i, j}$ or $S'_{i, j}$ implies that $\gamma > \sum_{k = i}^{j - 1} \wedge^k \omega$. 

Assume $\sum_{k = i}^{j - 1} \wedge^k \omega \in S'_{j}$ for some $i$. Then it is also in $S'_{i, j}$; therefore, $\gamma(\rho) \geq i - j + 1$. But $\gamma(\rho) = i - j$, so this is impossible - so no root of the form $\sum_{k = i}^{j - 1} \wedge^k \omega \in S'_{j}$. Therefore, if $\gamma \in S'_{i, j}$, then $\gamma \geq \wedge^j \omega$. 

We therefore have that if $\gamma \in S_{i, j}$, then $\gamma > \wedge^{j - 1}\omega$, while if $\gamma \in S'_{i, j}$, then $\gamma > \wedge^j \omega$. But then $|S_{i, j}| \leq i (n - j)$, while $|S'_{i, j}| \leq i (n - j - 1)$. We know that $|S_{i, j}| + |S'_{i, j}| = i (n - j) + i (n - j - 1)$; therefore, we must have that all of the elements that could be in either must be, and therefore that $S_{i, j} = \{\gamma | \gamma > \wedge^i \omega, \wedge^{j - 1} \omega\}$, while $S'_{i, j} = \{\gamma | \gamma > \wedge^i \omega, \wedge^j \omega\}$. 

Correspondingly, if $i \geq j$ instead, then $S_{i, j} = \{\gamma | \gamma > \wedge^i \omega, \wedge^j \omega\}$ while $S'_{i, j} = \{\gamma | \gamma > \wedge^i \omega, \wedge^{j - 1} \omega\}$. Then as $S_j = \cup S_{i, j}, S'_{j} = \cup S'_{i, j}$, we get that 
$$S_j = \{\gamma | \gamma > \wedge^{j - 1} \omega \enspace \textrm{or} \enspace \gamma > \wedge^j \omega\}$$ and $$S'_j = \{\gamma | \gamma > \wedge^{j - 1} \omega \enspace \textrm{and} \enspace \gamma > \wedge^j \omega\}$$ 

Therefore 
\begin{align*}
L_\gamma &= \sum_{\gamma > \wedge^i \omega} a_i + \sum_{\substack{\gamma > \wedge^{j - 1} \omega \enspace \\ \textrm{or} \enspace  \gamma > \wedge^j \omega}} b_j + h_\gamma(\rho) + 1\\
L_\gamma' &= \sum_{\gamma > \wedge^i \omega} a_i + \sum_{\substack{\gamma > \wedge^{j - 1} \omega \enspace \\ \textrm{and} \enspace \gamma > \wedge^j \omega}} b_j + h_\gamma(\rho)
\end{align*}

This gives $b_{\alpha_i}$ as the product of the $L_\gamma, L_\gamma'$. We now only need to find $b_{\beta_j}$.

By property 6, the degree of $b_{\beta_j}$ is \[{(j - 1)(n - j) + j(n - j - 1) + 1 = (j - 1)(n - j - 1) + (j(n - j) - 1) + 1}\]. But $|S_j'| = (j - 1)(n - j - 1)$, and $|S_j| = j(n - j) - 1$, so only one factor is unaccounted for - and that factor, by property 2, is $b_j + 1$. By inspection, then, we get the formula in the theorem. 
\end{proof}

\section{Appendix}

\begin{theorem}
Let $V$ be a finite dimensional $\mathfrak{b}$-module. Then $(V \otimes \frac{U \mathfrak{g}}{(U \mathfrak{g}) \mathfrak{n}})^{\mathfrak{b}_{-\mu}}$ is a $U\mathfrak{h} = S\mathfrak{h}$ module of rank equal to the dimension of the $\mu$ weight subspace of $V$. 
\end{theorem}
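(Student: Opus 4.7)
The plan is to reduce to the case $\dim V = 1$ via a weight flag, and then to identify the $\mathfrak{b}$-semi-invariants of $M := U\mathfrak{g}/(U\mathfrak{g})\mathfrak{n}$ directly using PBW. As a right $S\mathfrak{h}$-module, PBW gives $M \cong U\mathfrak{n}^- \otimes_\mathbb{C} S\mathfrak{h}$. The relevant $\mathfrak{b}$-action on $M$ is the adjoint action, which descends to the quotient because $(U\mathfrak{g})\mathfrak{n}$ is stable under $\operatorname{ad}\mathfrak{b}$ (as $\mathfrak{n}$ is an ideal in $\mathfrak{b}$). Under this action $M$ splits as $\oplus_\alpha M_{-\alpha}$ with $M_{-\alpha} = U\mathfrak{n}^-_{-\alpha} \otimes S\mathfrak{h}$, and $\operatorname{ad}\mathfrak{n}$ strictly lowers the $U\mathfrak{n}^-$-degree. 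Since $V \otimes M$ is free over $S\mathfrak{h}$, its $\mathfrak{b}_{-\mu}$-semi-invariants form a torsion-free $S\mathfrak{h}$-submodule, so the rank is well defined.

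By Lie's theorem, choose a $\mathfrak{b}$-stable flag $0 = V_0 \subsetneq V_1 \subsetneq \cdots \subsetneq V_N = V$ with 1-dimensional quotients $\mathbb{C}_{\nu_i}$. Applying the left-exact functor $(- \otimes M)^{\mathfrak{b}_{-\mu}}$ to this flag produces a filtration of $(V \otimes M)^{\mathfrak{b}_{-\mu}}$ whose subquotients embed into $(\mathbb{C}_{\nu_i} \otimes M)^{\mathfrak{b}_{-\mu}}$. The 1-dimensional case is direct: for $V = \mathbb{C}_\nu$ the semi-invariance reduces to finding $m \in M$ that is $\operatorname{ad}$-$\mathfrak{n}$-invariant and of adjoint weight $-\mu - \nu$. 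A top-degree argument in the PBW filtration---if $m$ has a nonzero component of $U\mathfrak{n}^-$-degree $> 0$, then picking $e \in \mathfrak{n}$ dual to a positive root appearing in its top-degree monomial yields an $\operatorname{ad}$-$e$-image with strictly lower but still nonzero leading term---shows that $M^\mathfrak{n} = S\mathfrak{h}$, which has adjoint weight $0$. Hence $(\mathbb{C}_\nu \otimes M)^{\mathfrak{b}_{-\mu}}$ is free of rank $1$ exactly when $\nu$ matches the designated weight, and vanishes otherwise, so the filtration gives an upper bound on the rank equal to the dimension of the appropriate weight subspace of $V$.

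The hardest step is promoting this upper bound to an equality. The left-exact filtration provides no lower bound, and one must explicitly produce $\mathfrak{b}_{-\mu}$-semi-invariants in $V \otimes M$ to saturate the count. For each weight vector $v \in V$ of the correct weight, the naive candidate $v \otimes 1$ is semi-invariant only when $v \in V^\mathfrak{n}$, since $e \cdot (v \otimes 1) = (ev) \otimes 1$. When $ev \neq 0$ the remedy is a Shapovalov/Whittaker-type lift: add correction terms $v_j \otimes u^-_j s_j$ of strictly positive $U\mathfrak{n}^-$-degree and solve for the coefficients $s_j \in S\mathfrak{h}$ inductively so that the $\operatorname{ad}$-$\mathfrak{n}$ defect cancels at each degree. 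The technical crux is showing that this recursion terminates and that the $s_j$ can be chosen in $S\mathfrak{h}$ rather than in $\operatorname{Frac}(S\mathfrak{h})$; this should follow from torsion-freeness of $(V \otimes M)^{\mathfrak{b}_{-\mu}}$ together with a degree-by-degree surjectivity of $\operatorname{ad}\mathfrak{n}$ on the positive-degree pieces of the universal Verma module.
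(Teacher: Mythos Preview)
Your overall architecture matches the paper's: both arguments run a $\mathfrak{b}$-stable flag through the functor, reduce to the one-dimensional case, and identify $M^{\mathfrak{n}} = S\mathfrak{h}$ to handle that case. The paper works contravariantly via $\operatorname{Hom}_{\mathfrak{b},-\mu}(V^*,M)$ and filters $V^*$ by submodules whose successive quotients are concentrated in a single weight, but this is cosmetic. The real divergence is in the lower bound.

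The paper does not attempt to build explicit lifts. Instead it invokes a result of Kashiwara (his equation~(1.4)): for each step of the filtration there exists a nonzero $g_i \in S\mathfrak{h}$ such that $g_i \cdot \operatorname{Hom}_{\mathfrak{b},-\mu}(F_i,M)$ lies in the image of the restriction map from $\operatorname{Hom}_{\mathfrak{b},-\mu}(F_{i+1},M)$. Combined with injectivity of restriction at the non-$\mu$ steps, this forces the rank to be constant along those steps, and the single $\mu$-step is handled directly since pullback along a quotient is automatic. So the paper black-boxes exactly the piece you flag as hardest.

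Your proposed resolution has a genuine misconception in it. You say the crux is arranging the correction coefficients $s_j$ to lie in $S\mathfrak{h}$ rather than $\operatorname{Frac}(S\mathfrak{h})$, and that this ``should follow from'' degree-by-degree surjectivity of $\operatorname{ad}\mathfrak{n}$ on the positive-degree part of $M$. That surjectivity is false over $S\mathfrak{h}$: already for $\mathfrak{sl}_2$ one has $\operatorname{ad}(e)\cdot f^2 = 2f(h-1)$ in $M$, so $f$ itself is not in the image without inverting $h-1$. Consequently the integral lifts you are looking for need not exist, and Kashiwara's $g_i$ is exactly the denominator you would have to clear. The good news is that for a \emph{rank} statement you never needed integrality: it suffices to produce the semi-invariants over $\operatorname{Frac}(S\mathfrak{h})$, and there your recursion does close up, because after localization $M$ becomes the generic (irreducible) Verma module and $\operatorname{ad}\mathfrak{n}$ is surjective onto the positive-degree part. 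If you rewrite your last paragraph to work generically and drop the claim about $s_j \in S\mathfrak{h}$, the argument goes through and is essentially a hands-on unpacking of what the paper cites from Kashiwara.
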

\begin{proof}
As $V$ is finite-dimensional, we can define an increasing filtration by $\mathfrak{b}$-submodules $F_i$ on $V^*$ such that $F_{-1} = \{0\}$, $F_{i + 1}/F_i$ has a unique weight $\lambda_i$ (note that $F_{i + 1}/F_i$ is not necessarily 1-dimensional) , and the $\lambda_i$ are distinct. 

We have an isomorphism$(V \otimes \frac{U \mathfrak{g}}{(U \mathfrak{g}) \mathfrak{n}})^{\mathfrak{b}_{-\mu}} \simeq Hom_{\mathfrak{b}, -\mu}(V^*, \frac{U \mathfrak{g}}{(U \mathfrak{g}) \mathfrak{n}})$ where the subscript denotes that the homomorphism twists by $-\mu$; in other words, if $w \in W$ of weight $\lambda$ and $f \in Hom_{\mathfrak{b}, -\mu}(W, U)$, then $f(w)$ has weight $\lambda - \mu$. This homomorphism set has a residual right $U\mathfrak{h}$ action from its action on $\frac{U \mathfrak{g}}{(U \mathfrak{g}) \mathfrak{n}}$. We proceed by induction on the filtration. 

We can start with the base case of $F_{-1}$, the trivial $\mathfrak{b}$-module; by definition, $Hom_{\mathfrak{b}, -\mu}(F_{-1}, \frac{U \mathfrak{g}}{(U \mathfrak{g}) \mathfrak{n}}) = \{0\}$.

Assume $\lambda_{i + 1} \neq \mu$. Because $F_i$ is a submodule of $F_{i + 1}$, we get a map $\iota_i: Hom_{\mathfrak{b}, -\mu}(F_{i + 1}, \frac{U \mathfrak{g}}{(U \mathfrak{g}) \mathfrak{n}}) \rightarrow Hom_{\mathfrak{b}, -\mu}(F_i, \frac{U \mathfrak{g}}{(U \mathfrak{g}) \mathfrak{n}})$. Kashiwara \cite{KASHIWARA} proved (equation (1.4)) that $\exists g_i \neq 0 \in S \mathfrak{h}$ such that $g_i Hom_{\mathfrak{b}, -\mu}(F_i, \frac{U \mathfrak{g}}{(U \mathfrak{g}) \mathfrak{n}}) \in Im(\iota_i)$,. Therefore $\operatorname{rk} Hom_{\mathfrak{b}, -\mu}(F_{i + 1}, \frac{U \mathfrak{g}}{(U \mathfrak{g}) \mathfrak{n}}) \geq \operatorname{rk} Hom_{\mathfrak{b}, -\mu}(F_i, \frac{U \mathfrak{g}}{(U \mathfrak{g}) \mathfrak{n}})$ as $U \mathfrak{h}$-modules.

Let $f \in Hom_{\mathfrak{b}, -\mu}(F_{i + 1}, \frac{U \mathfrak{g}}{(U \mathfrak{g}) \mathfrak{n}})$ with $\iota_i(f) = 0$. Then $f(F_i) = 0$, so $f$ descends to a map $\bar{f}: F_{i + 1}/F_i \rightarrow \frac{U \mathfrak{g}}{(U \mathfrak{g}) \mathfrak{n}}$ which twists by $-\mu$. For any $v \in F_{i + 1}/F_i$, $\mathfrak{n}v = 0$, so $\mathfrak{n} \bar{f}(v) = 0$. But the only elements of $\frac{U \mathfrak{g}}{(U \mathfrak{g}) \mathfrak{n}}$ with this property are elements of $U \mathfrak{h}$, that is, elements of weight 0. As $\bar{f}$ twists by $-\mu$, and $\lambda_{i + 1} \neq \mu$, this is impossible unless $\bar{f} = 0$, and therefore $f = 0$ - so $\iota$ is injective. Therefore, we get that the ranks must in fact be equal. 

Assume $\lambda_{i + 1} = \mu$. As the $\lambda_i$ are distinct, by the above, $Hom_{\mathfrak{b}, -\mu}(F_i, \frac{U \mathfrak{g}}{(U \mathfrak{g}) \mathfrak{n}})$ is trivial. The map $\iota_i$ is not injective in this case; we need to find its kernel. Choose a basis of $v_j \in F_{i + 1}/F_i$; then for each $j$, we can define $\bar{f}_j: V_{i + 1}/V_i \rightarrow \frac{U \mathfrak{g}}{(U \mathfrak{g}) \mathfrak{n}}$ with $\bar{f}_j(v_k) = 1$ if $j = k, 0$ else. By inspection, these twist by $-\mu$. We can therefore extend these to $f \in Hom_{\mathfrak{b}, -\mu}(F_{i + 1}, \frac{U \mathfrak{g}}{(U \mathfrak{g}) \mathfrak{n}})$, so $\operatorname{rk} Hom_{\mathfrak{b}, -\mu}(F_i, \frac{U \mathfrak{g}}{(U \mathfrak{g}) \mathfrak{n}}) = dim F_{i + 1}/F_i = dim V^{\mu}$. 
\end{proof}

\begin{corollary}
Assume $V = V_\nu^*$ is the dual of a finite-dimensional irreducible representation of $U \mathfrak{g}$ with highest weight $\nu$. If $\nu < \mu$ or $\nu - \mu$ is not integral, then $(V \otimes \frac{U \mathfrak{g}}{(U \mathfrak{g}) \mathfrak{n}})^{\mathfrak{b}_{-\mu}}$ is trivial.
\end{corollary}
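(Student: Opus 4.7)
The plan is to apply the preceding theorem directly, with $V = V_\nu^*$. Unwinding the filtration in that proof (which is carried out on $V^* = V_\nu$), the rank of $(V_\nu^* \otimes \frac{U\mathfrak g}{(U\mathfrak g)\mathfrak n})^{\mathfrak b_{-\mu}}$ as a $U\mathfrak h$-module is the multiplicity of $\mu$ as a weight of $V_\nu$. Moreover, the injectivity of each restriction map $\iota_i$ established in the inductive step of that proof, combined with the base case $\mathrm{Hom}_{\mathfrak b,-\mu}(F_{-1},-)=0$, upgrades ``rank zero'' to ``module zero'': if $\mu$ is not a weight of $V_\nu$, then the module itself is literally $\{0\}$, since each Hom space embeds into the previous one via $\iota_i$.

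The corollary therefore reduces to verifying that $\mu$ is not a weight of $V_\nu$ under either hypothesis. I will invoke two standard facts about the weights $\lambda$ of an irreducible highest-weight representation $V_\nu$: first, $\lambda \leq \nu$ in the dominance order on $\mathfrak h^*$; second, $\nu - \lambda$ lies in the root lattice $Q$, so the set of weights of $V_\nu$ is contained in the single coset $\nu + Q$ of $Q$ inside the weight lattice.

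Taking $\lambda = \mu$: the hypothesis $\nu < \mu$ means $\mu > \nu$ and so $\mu \not\leq \nu$, contradicting the first condition; and the hypothesis that $\nu - \mu$ is non-integral, interpreted as $\nu - \mu \notin Q$, contradicts the second. In either case $\mu$ is not a weight of $V_\nu$, so the $\mu$-weight space vanishes and the previous theorem (combined with the vanishing upgrade above) yields the result.

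I expect no substantive obstacle. The only finesse is tracking the dualization that connects the weight-space statement of the preceding theorem to the weights of $V_\nu$, and noticing that the inductive argument in that proof in fact delivers an honest vanishing of the module under the hypotheses, rather than a mere rank-zero statement that would leave room for $U\mathfrak h$-torsion.
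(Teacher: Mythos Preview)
Your argument is correct, and in fact the paper gives no proof at all for this corollary --- it is simply stated after the theorem as an immediate consequence. Your proof is exactly the intended one: identify the rank with the dimension of the $\mu$-weight space of $V^* = V_\nu$ (via the filtration argument) and observe that under either hypothesis $\mu$ fails to be a weight of $V_\nu$.

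Your additional observation deserves mention: you correctly point out that the theorem, as stated, only gives rank zero, whereas the corollary claims the module is trivial (and this is how it is used in Section~\ref{Kashiwarasection}). You close this gap by noting that the injectivity of the restriction maps $\iota_i$ in the non-$\mu$ case, combined with the base case $\operatorname{Hom}_{\mathfrak b,-\mu}(F_{-1},-)=0$, forces the module itself to vanish, not merely its rank. The paper does not make this explicit, so your write-up is actually more careful than the paper on this point.
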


\begin{theorem}
Assume that the dimension of $V^\mu$ is 1. Then $(V \otimes \frac{U \mathfrak{g}}{(U \mathfrak{g}) \mathfrak{n}})^{\mathfrak{b}_{-\mu}}$ is free as a right $U \mathfrak{h}$-module.
\end{theorem}

In order to prove this, we will need a lemma about when a submodule of a free submodule is free. Let $A$ be a UFD, and let $N$ be an $A$-module. Define $A^{-1}N = \operatorname{Frac}(A) \tens{A} N$. 
\begin{lemma}
Let $A$ be a UFD, $M$ a free $A$-module, and $N$ a finitely generated submodule of $M$ of rank 1. If $A^{-1}N \cap M = N$, then $N$ is free.
\end{lemma}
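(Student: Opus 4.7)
The plan is to exploit the rank-one condition to produce a ``primitive'' generator of $N$ whose coordinates in a basis of $M$ have trivial greatest common divisor, and then show that this element actually generates $N$.

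First I would fix a basis $\{e_\alpha\}$ of the free module $M$ and pick any nonzero $n_0 \in N$; writing $n_0 = \sum_\alpha a_\alpha e_\alpha$, only finitely many coordinates $a_\alpha \in A$ are nonzero, so the greatest common divisor $d$ of these coordinates is well defined in the UFD $A$. Setting $n' = d^{-1} n_0$, the element $n'$ lies in $M$ (its coordinates $a_\alpha/d$ are all in $A$) and also in $A^{-1}N$, so the hypothesis $A^{-1}N \cap M = N$ forces $n' \in N$. Crucially, the coordinates of $n'$ now have GCD $1$.

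Next I would show $n'$ generates $N$. Because $A^{-1}N$ is one-dimensional over $\operatorname{Frac}(A)$, any $m \in N$ can be written as $m = (p/q) n'$ in $A^{-1}N$ with $p, q \in A$ coprime. Clearing the denominator in $M$ gives $q m = p n'$, so comparing coordinates yields $q c_\alpha = p b_\alpha$ for every $\alpha$, where $b_\alpha$ and $c_\alpha$ are the coordinates of $n'$ and $m$. Since $\gcd(p,q) = 1$, $q$ must divide each $b_\alpha$, and since $\gcd\{b_\alpha\} = 1$ this forces $q$ to be a unit. Hence $p/q \in A$, so $m \in A n'$, giving $N = A n'$. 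Because $n' \neq 0$ in the torsion-free module $M$, the multiplication map $A \to A n'$, $a \mapsto a n'$, is injective, so $N \cong A$ is free of rank one.

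The construction only needs two ingredients: the existence of GCDs (the UFD hypothesis) and the identity $A^{-1}N \cap M = N$, which is exactly what is needed to promote the a priori only rational element $d^{-1}n_0$ into an actual element of $N$. The only delicate point is verifying that the GCD argument makes sense for a possibly infinite-rank $M$; but each element of $M$ has finite support in the chosen basis, so this poses no real difficulty.
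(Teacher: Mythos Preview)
Your proof is correct. Both you and the paper use the same essential ingredients --- coordinates in a basis of $M$, a GCD argument (which needs the UFD hypothesis), and the saturation condition $A^{-1}N\cap M=N$ to pull a priori only rational elements back into $N$ --- but the organization differs. You construct a single primitive element $n'$ directly (by dividing an arbitrary $n_0\in N$ by the GCD of its coordinates) and then show in one stroke that every $m\in N$ lies in $An'$. The paper instead starts from a finite generating set and inductively reduces its size: given two generators $n,n'$ with $bn=b'n'$ for coprime $b,b'$, it produces $n''=n/b'=n'/b\in A^{-1}N\cap M=N$ generating both, and repeats. Your route is a bit cleaner and avoids the induction; the paper's route avoids ever needing the notion of the GCD of \emph{all} coordinates of a single element, working only with the pairwise coprimality of $b,b'$. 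Either way the heart of the argument is the same.
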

\begin{proof}
Note that $A^{-1}N \cap M = \{m \in M|\exists a \enspace s.t. \enspace am \in N\}$. Let $\{n_i\}$ be a minimal generating set of $N$. Assume it contains at least two elements $n, n'$. If we can find some $n'' \in n$ that generates both $n, n'$, then by induction, we can show that $N$ is free. 

Because $N$ is rank 1, there must be some $b, b'$ relatively prime with $b n = b' n'$. Let $\{v_j\}$ be a basis for $M$; then $n = \sum a_j v_j, n' = \sum a'_j v_j$. As they are a basis, we get that $b a_j = b' a'_j$. Then as $A$ is a UFD, $b'|a_j$. Let $a''_j = \frac{a_j}{b'}$, and let ${n'' = \sum a''_j v_j = \frac{n}{b'} = \frac{n'}{b}}$. As $a''_j \in A$ for each $j$, $n'' \in M$, so $n'' \in A^{-1}N \cap M = N$. Therefore, the induction step is done, and $N$ is free. 
\end{proof}

\begin{proof}[Proof of Theorem 18]
Let $N = (V \otimes \frac{U \mathfrak{g}}{(U \mathfrak{g}) \mathfrak{n}})^{\mathfrak{b}_{-\mu}}$; by the first theorem, is a rank 1 $U \mathfrak{h}$-module. As $\frac{U \mathfrak{g}}{(U \mathfrak{g}) \mathfrak{n}}$ is a free right $U \mathfrak{h}$-module, so is $M = V \otimes \frac{U \mathfrak{g}}{(U \mathfrak{g}) \mathfrak{n}}$. We then only need to prove that $N (U \mathfrak{h})^{-1} \cap M = N$. 

By definition, $N (U \mathfrak{h})^{-1} \cap M = \{m \in M|\exists h \in U \mathfrak{h} \; mh \in N\}$. Let $m \in M, h \in U \mathfrak{h}$ such that $mh \in N$. We need to prove two things: that for any $n \in \mathfrak{n}, nm = 0$, and that for any $h' \in \mathfrak{h}, h'm = m (h' - \mu)$.  If $mh \in N$, then $nmh = 0$. But $M$ is free as a right $U \mathfrak{h}$-module, so $nm = 0$. If $mh \in N$, then $h'mh = mh (h' - \mu) = m(h' - \mu) h$. Again as $M$ is free, $h'm = m (h' - \mu)$. Therefore, by the lemma, $N$ is a free rank 1 $U \mathfrak{h}$-module. 
\end{proof}

\end{document}